\documentclass{article} 
\usepackage[T1]{fontenc}
\usepackage{indentfirst}
\usepackage{amsmath}
\usepackage{amssymb}
\usepackage{amsthm}
\usepackage{cancel}
\usepackage{dsfont}
\usepackage{stmaryrd}
\SetSymbolFont{stmry}{bold}{U}{stmry}{m}{n}
\usepackage{hyperref}
\usepackage{empheq}
\usepackage{cleveref}
\usepackage{bm}
\usepackage{enumitem}
\usepackage[pdftex]{graphicx}
\usepackage{mathtools}
\usepackage{pgf, tikz}
\usepackage{appendix}

\title{Continuous dependence of the pressure field with respect to endpoints for ideal incompressible fluids}
\author{Aymeric \textsc{Baradat} \footnote{CMLS, \'Ecole Polytechnique and \'Ecole Normale Sup\'erieure, France. \newline
E-mail: \emph{aymeric.baradat@polytechnique.edu}}}
\date{}

\theoremstyle{plain}
\newtheorem{Thm}{Theorem}

\newtheorem{Lem}[Thm]{Lemma}
\theoremstyle{definition}
\newtheorem{Def}[Thm]{Definition}

\theoremstyle{remark}
\newtheorem{Rem}[Thm]{Remark}

\newcommand{\R}{\mathbb{R}}
\newcommand{\Z}{\mathbb{Z}}
\newcommand{\T}{\mathbb{T}}

\newcommand{\N}{\mathbb{N}}
\newcommand{\A}{\mathcal{A}}
\newcommand{\C}{\mathcal{C}}
\newcommand{\PM}{\mathcal{P}}
\newcommand{\Pb}{\mathsf{Pb}}

\newcommand{\cg}{\langle}
\newcommand{\cd}{\rangle}
\newcommand{\1}{\mathds{1}}
\newcommand{\eps}{\varepsilon}
\newcommand\pf{_{\#}}

\DeclareMathOperator{\Lip}{Lip}
\DeclareMathOperator{\Div}{div}

\DeclareMathOperator{\Sdiff}{SDiff}

\DeclareMathOperator{\Diam}{Diam}

\DeclareMathOperator{\D}{d \!}
\DeclareMathOperator{\DD}{d}
\DeclareMathOperator{\DDD}{D}
\DeclareMathOperator{\DMK}{d_{MK}}

\DeclareMathOperator{\Id}{Id}
\DeclareMathOperator{\loc}{loc}
 
\begin{document} 
\maketitle
\begin{abstract}
In the Brenier variational model for perfect fluids, the datum is the joint law of the initial and final positions of the particles. In this paper, we show that both the optimal action and the pressure field are H\"older continuous with respect to this datum metrized in Monge-Kantorovic distance. 
\end{abstract}
\section*{Introduction}
The movement of an inviscid incompressible fluid without any external force in a domain $D$ (here, $D$ will be the $d$-dimensional torus $\T^d := \R^d / \Z^d$) is usually described by a time dependent vector field $ v = v(t,x)$ satisfying the Euler equations:
\[\left\{
\begin{aligned}
&\partial_t v(t,x) + v(t,x) \cdot \nabla v(t,x) = - \nabla p(t,x), &(t,x) \in \R_+ \times \T^d, \\
&\Div v(t,x) = 0,  &(t,x) \in \R_+ \times \T^d. 
\end{aligned}\right.
\]
It is now well understood since the works of Arnold \cite{arn66} or more recently \cite{arn99}, that these equations are the formal geodesic equations on the formal infinite dimensional Lie group of well-oriented measure preserving diffeomorphisms $\Sdiff(\T^d)$, in which the (right-invariant) action of a curve $g = (g(t,x))$ is defined by
\[
\frac{1}{2} \int_0^1 \hspace{-5pt} \int |\partial_t g(t,x) |^2 \D x \D t.
\]
However Shnirelman showed in \cite{shn87} that the study of the geometry of this group leads to difficulties: for example, in dimension 2, some diffeomorphisms cannot be connected to the identity map by a curve of finite length, and in dimension 3, there are diffeomorphisms that cannot be connected by optimal curves (the limiting object of the minimizing procedure cannot be a curve of diffeomorphisms).

That is why in \cite{bre89}, Brenier introduced a relaxed model having all the good properties of a variational problem. Let us describe it. Take a positive Radon measure $\gamma$ on the product space $\T^d \times \T^d$ whose both marginals are the Lebesgue measure $\lambda$ (in the sequel, such a measure will be called bistochastic measure on $\T^d$). An admissible generalized flow for the problem $\Pb(\gamma)$ is by definition a generalized flow $\eta$, that is to say a Borel measure on $\C := C^0([0,1]; \T^d)$, such that:
\begin{itemize}
\item the joint law between the initial time and the final time is $\gamma$, namely for all measurable nonnegative function $\varphi$ on $\T^d \times \T^d$,
\begin{equation}
\label{boundarycondition}
\int \varphi (\omega(0), \omega(1)) \D \eta (\omega) = \int \varphi(x,y) \D \gamma(x,y)
\end{equation}
(we say that $\eta$ satisfies the endpoints condition $\gamma$),
\item the marginal at every time $t \in [0,1]$ of $\eta$ is the Lebesgue measure, \textit{i.e} for all measurable nonnegative function $\varphi$ on $\T^d$,
\begin{equation}
\label{incompressibility}
\int \varphi (\omega(t)) \D \eta (\omega) = \int \varphi(x) \D x,
\end{equation}
\item the flow has a finite action:
\begin{equation}
\label{finiteactionflow}
\mathcal{A}(\eta) := \int \frac{1}{2} \int_0^1 |\dot{\omega}(t)|^2 \D t \D \eta(\omega) < + \infty,
\end{equation}
where here and in all this text, the action of a curve is set to $+ \infty$ if the curve is not absolutely continuous.
\end{itemize}

The relaxed solutions to incompressible Euler equations with endpoints condition $\gamma$ are the solutions to the minimization problem $\Pb(\gamma)$ consisting in finding the admissible flows with endpoints condition $\gamma$ that minimize $\mathcal{A}$ in the class of such flows. With an abuse of notations, we will write $\A(\gamma)$ the action of the optimal flows in $\Pb(\gamma)$. Remark that the action is proper and lower semi-continuous with respect to the topology of narrow convergence of generalized flows, so that the existence of solutions is equivalent to the existence of admissible flows. It was shown in \cite{bre89} that in the $d$-dimensional torus (or in the $d$-dimensional cube), such flows always exist, but are in general non-unique. It is shown furthermore that the optimal action is bounded uniformly in $\gamma$. We call this property the finite diameter property of the Brenier model. In other terms
\begin{equation}
\label{finitediameter}
\Diam(d) := \sup_{\gamma \mbox{\scriptsize{ bistochastic on }} \T^d} \sqrt{ \A(\gamma) } < + \infty.
\end{equation}
 This property will be useful in the sequel. 

Following Brenier, a full theory was developed to study these objects and it seems to the author that \cite{amb09} is a good overview of what is known on the topic. One of the main results of this theory already seen in \cite{bre93} is the existence of a unique scalar pressure field $p \in \mathcal{D}'(]0,1[ \times \T^d)$ which depends only on $\gamma$ and which satisfies for all generalized solution $\eta$ with endpoint condition $\gamma$ and for all $\alpha \in \mathcal{D}(]0,1[ \times \T^d)$ the equation
\begin{equation}
\label{defpressure}
\cg p, \Div \alpha \cd_{\mathcal{D}', \, \mathcal{D}} = - \int \hspace{-5pt} \int_0^1 \dot{\omega}(t) \cdot \left( \frac{\D}{\D t} \alpha(t, \omega(t))  \right) \D t \D \eta(\omega).
\end{equation}

This pressure field is interpreted as the Lagrange multiplier associated to the incompressibility constraint in the minimization problem in the following sense. If we take any generalized flow $H$ with endpoints condition $\gamma$, and if we call $R(t,\bullet) = 1 + r(t,\bullet)$ the marginal at time $t$ of $H$, then as soon as $r$ is sufficiently regular, 
\begin{equation}
\label{pmultlag}
\A(\gamma)+ \cg p, R - 1 \cd_{\mathcal{D}', \, \mathcal{D}} = \A(\gamma)+ \cg p, r \cd_{\mathcal{D}', \, \mathcal{D}} \leq \A(H). 
\end{equation}

In \cite{brenier1992motion}, Brenier gave an even more intuitive interpretation of the pressure field, which was proven rigorously in \cite{amb09} by Ambrosio and Figalli: the solutions of the minimisation problem only charge trajectories that satisfy Lagrange least action principle, with a potential given by the pressure field. More explicitly, if $\eta$ is a solution to the minimization problem, and if $p$ is its pressure field, then for all $0<s \leq t < 1$, $\eta$-almost all curve $\omega$, minimizes the functional
\begin{equation}
\label{lagrange}
w \mapsto \int_s^t \left\{ \frac{1}{2}|\dot{w}(u)|^2 - p(u, w(u)) \right\} \D u
\end{equation}
in the class of curves $w$ that share their locations with $\omega$ at times $s$ and $t$. In other terms, the problem generates a certain potential $p$, and the solutions follow the paths of classical mechanics with respect to this potential. This has been done thanks to an important regularity result proved in \cite{ambfig08}, improving a previous result given in \cite{bre93}, namely that $p$ is of regularity $L^2_{\loc}(]0,1[, BV(\T^d))$. More precisely, for all $0< \tau \leq 1/4$, there is $M$ only depending on $d$ and $\tau$ (in particular not depending on $\gamma$) such that
\begin{equation}
\label{pl2bv}
\| p \|_{L^2([\tau, 1-\tau]; BV(\T^d))} \leq M.
\end{equation} 
In particular, this result shows that $p$ is an $L^p$ function for some $p>1$, which is surprisingly sufficient to deal with the quantity in \eqref{lagrange} for $\eta$-almost all curve. 

Let us mention that more regularity is expected. In \cite{Brenier2013}, Brenier conjectured that the pressure field should be semi-concave in space (the second order derivatives of $p$ should be measure-valued, and not the first order derivatives). He also gave an example of solution for which the pressure is semi-concave but not $C^1$. The question of regularity is crucial since for instance, in dimension $d=1$, a uniqueness result has been proved in \cite{ber09} under the condition that $p$ is of regularity $C^{\infty}$.

Here, we will discuss a linked but different topic. The purpose of the present paper is to study the stability of the relaxed solutions to incompressible Euler equations with respect to the endpoints condition. More explicitly, we will show that the action and the pressure field are H\"older continuous with respect to the endpoints condition metrized with the Monge-Kantorovitch distance. In a forthcoming article \cite{Baradat2018kinetic}, we will use very different techniques to bound from above the H\"older exponents appearing in these estimates, showing in particular that the results presented here cannot be much improved.

\paragraph{Notations.} We call $\DD_2$ the Euclidean distance on $(\T^d)^2$ and $\DMK$ the Monge-Kantorovich distance of exponent 2 on the set of Borel probability measures on $(\T^d)^2$. Remark that because of the finite diameter of the torus,
\begin{equation}
\label{finitediameterbistochastic}
\sup_{\mu, \nu \in \PM((\T^d)^2)} \DMK(\mu,\nu) < +\infty.
\end{equation}

For all $t \in [0,1]$, $e_t$ will be the map from $\C$ to $T^d$ defined for all $\omega \in \C$ by 
\begin{equation}
\label{defet}
e_t ( \omega) := \omega(t). 
\end{equation}

We will denote by $AC^2([0,1];\T^d)$ the set of absolutely continuous curves $\omega$ from $[0,1]$ to $\T^d$ such that
\begin{equation}
\label{finiteactioncurve}
A(\omega) := \frac{1}{2} \int_0^1 |\dot{\omega}(t) |^2 \D t < + \infty.
\end{equation}
The number $A(\omega)$ is called the action of the curve $\omega$ and is set to $+ \infty$ if $\omega$ is not absolutely continuous. Thus, if $\eta$ is a generalized flow, its action designed by \eqref{finiteactionflow} also reads
\begin{equation}
\label{redefactionflow}
\A(\eta) = \int A(\omega) \D \eta (\omega).
\end{equation}

If $\mathcal{X}$ and $\mathcal{Y}$ are two measurable sets, $m$ is a measure on $\mathcal{X}$ and $f$ is a measurable map from $\mathcal{X}$ to $\mathcal{Y}$, we will denote by $f \pf m$ the push-forward of $m$ by $f$, that is the measure on $\mathcal{Y}$ defined by the property
\begin{equation}
\label{defpushforward}
\forall \varphi: \mathcal{Y} \to \R_+ \mbox{ measurable}, \quad\int \varphi(y) \D f\pf m (y) = \int \varphi(f(x)) \D m(x). 
\end{equation}
In particular, using \eqref{defet}, \eqref{boundarycondition} can be rewritten "$(e_0,e_1) \pf \eta = \gamma$", and \eqref{incompressibility} can be rewritten "for all $t \in [0,1]$, $e_t {}\pf \eta = \lambda$".

A functional space will be of particular interest. This is the space $E$ of continuous functions $f = (f(t,x))$ satisfying the properties:
\begin{itemize}
 \item for all $t \in [0,1]$, $f(t, \bullet )$ is Lipschitz and 
 \begin{equation}
 \label{finitesuplipf}
 \sup_t \Lip f(t, \bullet) < + \infty;
 \end{equation}
 \item for all $x \in \T^d$, $f(\bullet , x) \in AC^2([0,1])$ and the temporal derivative $\partial_t f$ which is punctually defined for almost all $t \in [0,1]$ for all almost all $x \in \T^d$ satisfies
 \begin{equation}
 \label{AC2Linfty}
 \int_0^1 \| \partial_t f(t, \bullet) \|_{L^{\infty}(\T^d)}^2 \D t < + \infty.
 \end{equation}
\end{itemize}
If $f \in E$, we call 
\begin{equation}
\label{defN}
N(f) :=  \sup_t \Lip f(t, \bullet) +  \left( \int_0^1 \| \partial_t f(t, \bullet) \|_{L^{\infty}(\T^d)}^2 \D t \right)^{1/2}.
\end{equation}
Remark that when restricted to functions having zero mean or to functions cancelling for $t$ equal to $0$ or $1$, this is a norm. If there is $\tau \in ]0,1/4[$ such that $f \in E$ cancels for $t \in [0,\tau]\cup[1-\tau , 1]$, we write $f \in E_{\tau}$. With an abuse of notations, we keep the same notations if $f$ has its values in $\R^d$ or $\T^d$.

Let us now give an outline of the paper. It is divided in three parts. 

First, we will show that the optimal action is H\"older continuous with respect to the endpoint conditions. More precisely, we will show the following theorem.
\begin{Thm}
\label{continuityaction}
There exists $M >0$ only depending on $d$ such that for all bistochastic measures $\mu$ and $\nu$ on $\T^d$,
\[
\A(\nu) \leq \A(\mu) + M \DMK(\mu, \nu)^{1/(d+3)}.
\]
As the role of $\mu$ and $\nu$ are symmetric, $\A$ is $1/(d+3)$-H\"older continuous. 

For example, if $d=3$, the action is $1/6$-H\"older continuous.
\end{Thm}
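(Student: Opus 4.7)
The plan is to build, for each pair of bistochastic measures $\mu,\nu$ with $\delta := \DMK(\mu,\nu)$, an admissible generalized flow $\tilde\eta$ for $\Pb(\nu)$ whose action exceeds $\A(\mu)$ by at most $M \delta^{1/(d+3)}$. The template I would use: fix $\tau \in (0,1/4)$ to be chosen later, split $[0,1]$ into $[0,\tau]\cup[\tau,1-\tau]\cup[1-\tau,1]$, and let $\pi$ be an optimal coupling between $\mu$ and $\nu$ on $(\T^d)^4$, satisfying $\int \DD_2^2\, d\pi = \delta^2$. On the middle subinterval I would time-rescale an optimizer $\eta^*$ of $\Pb(\mu)$: the resulting flow is incompressible with endpoints $\mu$ and has action bounded by $\A(\mu)/(1-2\tau) \leq \A(\mu) + 4\tau\Diam(d)^2$. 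On each flanking subinterval I would insert a ``bridge'' admissible flow connecting the $\nu$-endpoints drawn from $\pi$ to the $\mu$-endpoints of the middle flow. The three pieces are then glued together via the disintegrations of $\eta^*$ and of $\pi$ with respect to $(x_0,x_1) \in (\T^d)^2$.

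The crucial input is a bound on the action of these bridge flows in terms of the Monge-Kantorovich distance between their bistochastic endpoint condition and the diagonal $\Delta := (\Id,\Id)\pf\lambda$. A naive straight-line interpolation on $[0,\tau]$ from $y_0$ to $x_0$, sampled under the relevant marginal of $\pi$, has expected action $\leq \delta^2/(2\tau)$ per flank but generically fails to be incompressible, since $(1-t/\tau)\Id + (t/\tau)T$ does not preserve Lebesgue for a typical measure-preserving $T$. The key technical lemma needed is therefore of the form: for any bistochastic $\sigma$ on $\T^d$,
\[
\A(\sigma) \leq C\,\DMK(\sigma,\Delta)^{2/(d+3)}.
\]
Time-rescaling such a bridge to length $\tau$ yields bridge action at most $C\delta^{2/(d+3)}/\tau$, so the total extra action of $\tilde\eta$ over $\A(\mu)$ becomes $O(\tau + \delta^{2/(d+3)}/\tau)$, minimized at $\tau \sim \delta^{1/(d+3)}$ and producing the claimed H\"older exponent.

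To prove the key lemma, I would discretize at scale $\epsilon = 1/N$: partition $\T^d$ into $N^d$ cubes of side $1/N$, approximate $\sigma$ by a cell-matching $\sigma_N$ at Monge-Kantorovich cost $O(1/N)$, apply Birkhoff's theorem to decompose the resulting bistochastic $N^d \times N^d$ matrix as a convex combination of permutations of cells, and build an explicit admissible flow realizing each permutation as a concatenation of local cell-swaps performed on small essentially disjoint space-time cylinders. Estimating the total action from the sum of cylinder contributions via Cauchy-Schwarz, and balancing the discretization error against the per-swap cost, selects $N$ in terms of $\delta$ and gives the desired exponent.

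The main obstacle, and the technical heart of the argument, is precisely this discretization construction: maintaining exact incompressibility of the concatenated swap flow demands careful scheduling or spatial disjointness of the swap supports, and the competition between the aggregate cylinder volumes and the fixed unit volume of $\T^d$ is what produces the exponent $d+3$. Once the lemma is in hand, the three-piece gluing and the optimization in $\tau$ are routine bookkeeping, and the symmetry between $\mu$ and $\nu$ in the final statement is recovered by exchanging their roles in the construction.
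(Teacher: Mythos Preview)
Your three-piece gluing and the reduction to a near-identity bridge estimate are both sound, but the whole argument rests on the key lemma $\A(\sigma)\le C\,\DMK(\sigma,\Delta)^{2/(d+3)}$, and your sketch does not deliver that exponent. Two concrete obstacles: first, the Birkhoff decomposition of the discretized bistochastic matrix gives no control on the displacements of the individual permutations --- even when $\sigma$ is concentrated near the diagonal the convex combination may involve permutations moving cells across the whole torus, and you provide no bound on $\sum_k\alpha_k\,\A(\eta_{P_k})$ in terms of $\DMK(\sigma,\Delta)$; second, the passage from $\sigma$ to the discretized $\sigma_N$ (``Monge--Kantorovich cost $O(1/N)$'') itself requires an incompressible correction whose action you have not estimated. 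The Shnirelman regularize-and-straighten technique, applied with $\mu=\Delta$, gives only exponent $1/(d+3)$ for this lemma; feeding that into your bootstrap yields the theorem with exponent $1/[2(d+3)]$, strictly worse than stated. In short, the bridge-and-glue structure forces a halving of whatever exponent you can prove near the identity, so matching $1/(d+3)$ would require a near-identity estimate genuinely stronger than the theorem itself.

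The paper sidesteps this loss by perturbing the optimizer $\eta$ of $\Pb(\mu)$ directly rather than detouring through the identity. Using an optimal plan $\Gamma$ between $\mu$ and $\nu$, each path $\omega$ is replaced by $t\mapsto\omega(t)+(1-t)(X-x)+t(Y-y)$ for $(x,y,X,Y)$ drawn from $\Gamma$; this produces a flow with endpoint law $\nu$ whose action exceeds $\A(\mu)$ by $O(\DMK(\mu,\nu))$, but which is compressible. One then mollifies in space at scale $\eps$ (inserting linear segments of duration $\eps$ at each end so the endpoints are unchanged), at action cost $O(\eps)$; the resulting smooth density $Q$ satisfies $N(Q-\lambda)=O(\DMK(\mu,\nu)/\eps^{d+2})$, and a Dacorogna--Moser flow map restores incompressibility at the same cost in action. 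Balancing $\eps$ against $\DMK(\mu,\nu)/\eps^{d+2}$ gives $\eps=\DMK(\mu,\nu)^{1/(d+3)}$ and the claimed exponent in one stroke, with no separate near-identity lemma.
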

The proof of this theorem will be widely inspired by the pioneering work \cite{shn94}. In that paper, given $\gamma$ a bistochastic measure, the author presents a technique to compare the optimal action in $\Pb(\gamma)$ with the action of the (compressible) generalized flow in which the particles move along straight lines. In our proof, given $\mu$ and $\nu$ two bistochastic measures and $\eta$ an optimal flow in $\Pb(\mu)$, we will build from $\eta$ a (compressible) generalized flow having $\nu$ as endpoints condition and which is close to $\eta$. Then, we will use the same technique to compare the optimal action in $\Pb(\nu)$ with the action of this flow. In fact, we will prove a more general lemma authorizing other marginals than Lebesgue, as needed to prove the stability of the pressure field. 

In a second time, we will show in an explicit example that Theorem \ref{continuityaction} is no longer true if we replace Brenier's model by its generalization discussed by Ambrosio and Figalli in \cite{amb09}.

Finally, we will exploit the first section to show some H\"older continuity property for the pressure field. The result is the following.
\begin{Thm}
\label{stabilitypressure}
Take $\tau \in ]0,1/4[$. There exists $M$ depending only on $d$ and $\tau$ such that for all bistochastic measures $\mu$ and $\nu$ on $\T^d$, 
\[
\sup_{\{ \xi \in E_{\tau}, \, N(\xi) \leq 1 \}} \big| \big\cg \nabla p_{\nu} - \nabla p_{\mu}, \xi \big\cd \big| \leq M \DMK(\mu, \nu)^{1/[2+2(d+1)(d+2)]}.
\]
The gradient of the pressure field is $1/[2+2(d+1)(d+2)]$-H\"older continuous with respect to the endpoints condition (when measured in the dual of $E_{\tau}$).

For example, in dimension $3$, the gradient of the pressure field is $1/42$-H\"older continuous with respect to the endpoints condition.
\end{Thm}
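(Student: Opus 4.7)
The plan is to obtain the estimate by perturbing an optimal flow along the test field $\xi$, modifying its endpoints by a Shnirelman--type construction (as in Theorem \ref{continuityaction}), and closing the estimate via the Lagrange multiplier inequality \eqref{pmultlag} applied crosswise between $\mu$ and $\nu$. Fix $\xi \in E_\tau$ with $N(\xi) \leq 1$ and a small $s>0$, and define the perturbation map $\Phi_s:\C\to\C$ by $\Phi_s(\omega)(t) := \omega(t) + s\xi(t,\omega(t))$. Since $\xi$ vanishes on $[0,\tau]\cup[1-\tau,1]$, $\Phi_s$ preserves endpoints and is a homeomorphism on $\T^d$ at each time for $s$ small. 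For any incompressible flow $\eta$ with endpoints $\gamma$, the push-forward $\eta^s := \Phi_s\pf\eta$ has endpoints $\gamma$, marginals $R^s(t,\cdot) = 1 - s\Div\xi(t,\cdot) + s^2 F_2^s(t,\cdot)$ with $\|F_2^s\|_\infty \leq C N(\xi)^2$, and action
\[
\A(\eta^s) = \A(\eta) + s \int \hspace{-5pt} \int_0^1 \dot{\omega}(t)\cdot \tfrac{\D}{\D t}\xi(t,\omega(t))\,\D t\,\D\eta(\omega) + s^2 Q_\eta(\xi),
\]
with $|Q_\eta(\xi)| \leq C N(\xi)^2(1+\A(\eta))$ by Cauchy--Schwarz. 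When $\eta$ is optimal for $\Pb(\gamma)$, the linear coefficient equals $s\,\cg\nabla p_\gamma,\xi\cd$ by \eqref{defpressure}, so the perturbation picks up the pressure gradient to first order in $s$.

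The main step is to transfer such a perturbed flow between the two endpoints. Starting from an optimal flow $\eta_\mu$ for $\Pb(\mu)$ and its perturbation $\eta_\mu^s$, use a near-optimal Monge--Kantorovich plan $\pi$ between $\mu$ and $\nu$ to modify the endpoints: insert linear reconnection segments on $[0,\sigma]\cup[1-\sigma,1]$ (with $\sigma<\tau$) to produce a flow $\tilde{\eta}$ with endpoints $\nu$. The generalized form of Theorem \ref{continuityaction} allowing non-Lebesgue marginals, as announced in the outline, should give $\A(\tilde{\eta}) \leq \A(\eta_\mu^s) + \mathrm{err}_1(\DMK(\mu,\nu),\sigma)$ with $\mathrm{err}_1 \lesssim \DMK(\mu,\nu)^{1/(d+3)}$ after balancing $\sigma$. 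Applying \eqref{pmultlag} to $\tilde{\eta}$ with multiplier $p_\nu$, using the expansion of $\cg p_\nu, R^s-1\cd$ in the interior, and invoking Theorem \ref{continuityaction} to bound $|\A(\mu)-\A(\nu)|$, we obtain after rearrangement
\[
s\,\cg\nabla p_\nu - \nabla p_\mu,\xi\cd \leq C \DMK(\mu,\nu)^{1/(d+3)} + C s^2 + \mathrm{err}_2(s,\sigma,\mu,\nu),
\]
where $\mathrm{err}_2$ captures the pairing of $p_\nu$ against the discrepancy between the marginals of $\tilde{\eta}$ and $R^s$ in the reconnection layers. Swapping $\mu$ and $\nu$ then bounds the absolute value.

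The hard part is controlling $\mathrm{err}_2$ sharply. Since $p_\nu$ is only $L^2_{\loc}(]0,1[;BV(\T^d))$ by \eqref{pl2bv}, the pairing against the marginal discrepancy must go through a mollification of $p_\nu$ at some scale $\rho>0$, generating terms of the form $\rho^{-a}\DMK(\mu,\nu)^b$ that must be balanced against $s^2$ and $\DMK(\mu,\nu)^{1/(d+3)}$. Additionally, since Shnirelman-type reconnections based on a raw plan $\pi$ produce singular marginals, one must first spatially smooth $\pi$ at a $d$-dependent cost, introducing yet another small parameter. The combined optimization of $s$, $\sigma$, $\rho$, and the plan-smoothing scale against $\DMK(\mu,\nu)^{1/(d+3)}$ is the technical core of the argument and is what should produce the H\"older exponent $1/[2+2(d+1)(d+2)]$ announced.
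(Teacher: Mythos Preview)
Your overall strategy matches the paper's: perturb an optimal flow for $\mu$ along $\xi$ to extract $\cg\nabla p_\mu,\xi\cd$, transfer the endpoints to $\nu$ via the Shnirelman-type Lemma~\ref{generalcontinuityaction}, and apply the Lagrange multiplier inequality \eqref{pmultlag} at $\nu$ to extract $\cg\nabla p_\nu,\xi\cd$. Two points in your execution, however, are off and lead you to introduce an unnecessary mollification of the pressure.

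First, the endpoint transfer with non-Lebesgue marginals does \emph{not} cost $\DMK(\mu,\nu)^{1/(d+3)}$. Lemma~\ref{generalcontinuityaction} gives an error of order $\eps + (1+N(\rho^\eps))\,\DMK(\mu,\nu)/\eps^{d+2}$, and for a generic density one only has $1+N(\rho^\eps) \lesssim \eps^{-(d+1)}$ by \eqref{Nrhoeps}. This worse dependence is precisely where the factor $(d+1)(d+2)$ in the final exponent originates; your bound would yield a strictly better exponent than the one in the statement.

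Second, and more importantly, you have misread what the construction outputs. The flow $\tilde\eta$ does not have density $R^s$ with discrepancies confined to reconnection layers: the straightening step in Lemma~\ref{generalcontinuityaction} forces the density to be exactly the \emph{regularized} profile $(R^s)^\eps$ (the paper writes $\rho_\delta^\eps$, with $\delta$ playing the role of your $s$), which equals $1$ on $[0,\eps]\cup[1-\eps,1]$ and is smooth in space everywhere. Consequently $\cg p_\nu,\rho_\delta^\eps-1\cd$ is controlled directly by the $L^2_{\loc}(BV)$ bound \eqref{pl2bv}, and no mollification of $p_\nu$ is needed. What remains is only to compare $\rho_\delta^\eps-1$ with $-\delta\,\Div\xi$: the paper writes $\rho_\delta^\eps-1=-\delta\,\Div\xi^\eps+O(\delta^2)$ and bounds $|\cg\nabla p_\nu,\xi-\xi^\eps\cd|\leq M\eps$ using \eqref{pl2bv} again. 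Summing the four resulting inequalities (the upper bound at $\mu$, the Lagrange inequality at $\nu$, Lemma~\ref{generalcontinuityaction} applied to $\rho_\delta$, and Lemma~\ref{generalcontinuityaction} applied to $\lambda$) and optimizing in $\delta,\eps$ yields the stated exponent. Your term $\mathrm{err}_2$ therefore requires no extra smoothing parameter; the regularization is already built into the endpoint-transfer lemma.
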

  In the proofs, we will use the letter $M$ to denote a big constant depending only on the dimension (and on $\tau$ in the last section), which will be likely to grow from line to line.
\section{H\"older continuity of the optimal action}
As announced, this section is devoted to the proof of a generalized version of Theorem \ref{continuityaction}. Let us begin by describing it. We define a more general class of problems indexed as before by bistochastic measures, but also by prescribed densities. Remark that if $\eta$ is a generalized flow, then its marginal at time $t$, that we call $\eta_t$ is a probability measure on $\T^d$. Furthermore, the dominated convergence theorem let us deduce that the curve $t \mapsto \eta_t$ is continuous for the topology of narrow convergence (we will write $(\eta_t) \in C^0([0,1]; \PM(\T^d))$). The problems that we consider are the following ones.

\begin{Def}
\label{pbmurho}
Let $\gamma$ be bistochastic and $\rho = (\rho_t) \in C^0([0,1]; \PM(\T^d))$, we say that $\eta$ is an admissible flow for $\Pb(\gamma, \rho)$ if it satisfies \eqref{boundarycondition}, \eqref{finiteactionflow} and if for all $t \in [0,1]$, the marginal at time $t$ of $\eta$ is $\rho_t$. Then, the solutions of $\Pb(\gamma, \rho)$ are the minimizers of the action in the class of admissible flows. Still with abuses of notations, we call $\mathcal{A}(\gamma, \rho)$ the optimal action in $\Pb(\gamma, \rho)$. 
\end{Def}
\begin{Rem}
A necessary condition for solutions to exist is $\rho_0 = \rho_1 = \lambda$. If so, we write $\rho - \lambda \in C^0_0([0,1]; \PM(\T^d)- \lambda)$. This can be generalized without difficulty to other initial or final marginals, but this would add some useless details.
\end{Rem}
As before, the existence of minimizers is equivalent to the existence of admissible flows, and some weak results of existence can be found in Theorem 6.2 of \cite{amb09}, but in fact we will not really be interested in this question, and we will just write $\A(\gamma, \rho) = + \infty$ when there is no solution.
 
From now on, we chose $\psi$ a smooth nonnegative scalar function on $\R^d$ whose support is included in $[-1/4, 1/4]^d$ and of integral equal to one. Then for all $0 < \eps \leq 1$, and $v \in \R^d$, we define
\begin{equation}
\label{defpsieps}
\psi^{\eps}(v) := \frac{1}{\eps^d} \psi\left(\frac{v}{\eps}\right).
\end{equation}  
Of course these functions can be transported to functions on the torus by the natural injection from $[-1/4, 1/4]^d$ to $\T^d$ and we still call the resulting functions $(\psi^{\eps})$.

For reasons that will become clear, if $\rho - \lambda \in C^0_0([0,1]; \PM(\T^d)- \lambda)$ and if $0<\eps\leq1/4$ we call $\rho^{\eps}$ the function of $t \in [0,1]$ and $z \in \T^d$ defined by
\begin{equation}
\label{defrhoeps}
\rho^{\eps}(t,z) := \left\{ \begin{aligned} &1 &&\mbox{if } t \in [0, \eps],\\
& \int \psi^{\eps} (z-x) \D \rho_s(x) \mbox{ with } s = \frac{t-\eps}{1 - 2 \eps} &&\mbox{if } t \in [\eps, 1-\eps],\\
&1 && \mbox{if } t \in [1-\eps, 1].
\end{aligned}\right.
\end{equation}

Finally, if $m$ is a measure on $\T^d$ and $a>0$, we say that $m \geq a$ if $m - a \cdot \lambda$ is a positive measure, and if $\rho \in C^0([0,1]; \PM(\T^d))$, we say that $\rho$ is greater than $a$ if it is the case at all times $t\in [0,1]$. When regularizing the paths of measures, we will not denote differently the regularized paths of measures and their densities with respect to the Lebesgue measure.

The result that we will show is the following. We recall that $\Diam(D)$ is defined by \eqref{finitediameter} and that $N$ is defined by \eqref{defN}.
\begin{Lem}
\label{generalcontinuityaction}
Take $\mu$ a bistochastic measure on $\T^d$ and a prescribed density $\rho$ such that $\rho - \lambda = (\rho_t - \lambda )\! \in \! C^0_0([0,1]; \PM(\T^d)- \lambda)$ and $\rho$ is greater than $3/4$. Then there exist two constants $C >0$ and $M>0$ only depending on $d$ such that for all bistochastic measures $\nu$, and for all $0 <  \eps \leq 1/4$, if
\begin{gather}
\label{conditionlemma1}
\sqrt{\A(\mu, \rho)} \leq 2 \Diam(d),\\
\label{conditionlemma2} C (1 + N(\rho^{\eps}))\frac{\DMK(\mu, \nu)}{\eps^{d+2}} \leq 1/4,
\end{gather}
then
\begin{equation}
\label{estimateAnurhoeps}
\A(\nu, \rho^{\eps}) \leq \A(\mu, \rho) + M \bigg( \eps + \big\{1 + N( \rho^{\eps}) \big\} \frac{\DMK(\mu, \nu) }{\eps^{d+2}} \bigg).
\end{equation}

In particular, if $\rho = \lambda$, then \eqref{conditionlemma1} is always true by \eqref{finitediameter}, $N(\lambda) = 0$, and as soon as
\begin{equation}
\label{conditionlemma2bis} C\frac{\DMK(\mu, \nu)}{\eps^{d+2}} \leq 1/4,
\end{equation}
then
\begin{equation}
\label{estimateAnu}
\A(\nu) \leq \A(\mu) + M \bigg( \eps + \frac{\DMK(\mu, \nu) }{\eps^{d+2}} \bigg).
\end{equation}
\end{Lem}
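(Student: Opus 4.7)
The plan is to follow and adapt Shnirelman's strategy from \cite{shn94}: starting from an optimal flow $\eta$ for $\Pb(\mu,\rho)$, I will construct an admissible flow $\tilde\eta$ for $\Pb(\nu,\rho^\eps)$ whose action exceeds $\A(\mu,\rho)$ by at most the announced quantity, so that the conclusion follows from the minimality of $\A(\nu,\rho^\eps)$. The construction naturally splits into three pieces: (a) on the central interval $[\eps,1-\eps]$, a time-rescaling of $\eta$ combined with a spatial mollification by $\psi^\eps$ producing the mollified marginals of $\rho^\eps$; (b) on $[0,\eps]$, a short boundary flow driving the initial marginal of $\nu$ to the initial marginal of the rescaled-mollified central flow; (c) the symmetric boundary flow on $[1-\eps,1]$.

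Piece (a) is inexpensive. The time rescaling inflates the action by the factor $1/(1-2\eps)\leq 1+4\eps$, contributing at most $8\eps\,\A(\mu,\rho)\leq M\eps$ by assumption \eqref{conditionlemma1}. The spatial mollification can be realized by averaging over a random translation, i.e.\ pushing forward $\eta\otimes\psi^\eps(v)\DD v$ by $(\omega,v)\mapsto\omega+v$; this preserves the action per curve and convolves the marginals by $\psi^\eps$, producing exactly $\rho^\eps_t$ on $[\eps,1-\eps]$.

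Pieces (b) and (c) are the heart of the proof and the main obstacle. Given an optimal $\DMK$-coupling $\pi$ between $\mu$ and $\nu$ of cost $\DMK(\mu,\nu)^2$, a naive straight-line gluing from each $\nu$-endpoint to the corresponding (translated) $\mu$-endpoint over time $\eps$ would give a boundary action of order $\DMK(\mu,\nu)^2/\eps$, much better than the bound I am claiming; however, this gluing violates the incompressibility requirement that the marginal on $[0,\eps]$ be identically the Lebesgue measure. Enforcing incompressibility requires an additional divergence-free correction, and it is precisely this correction which is expensive: estimating the $L^\infty$-defect of the intermediate marginals $m_t$ through their convolution with $\psi^\eps$ (by Kantorovich--Rubinstein duality, gaining a factor $\|\psi^\eps\|_{\mathrm{Lip}}\leq M\eps^{-d-1}$) and then running a Dacorogna--Moser-type construction over the time interval of length $\eps$ yields a total extra action of order $(1+N(\rho^\eps))\DMK(\mu,\nu)/\eps^{d+2}$. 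The prefactor $1+N(\rho^\eps)$ absorbs the regularity of $\rho^\eps$ entering when the corrector is matched to the already-prescribed central density at $t=\eps$ and $t=1-\eps$.

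The hypothesis $\rho\geq 3/4$ and the smallness assumption \eqref{conditionlemma2} combine to guarantee that the corrected density remains bounded below by a positive constant throughout $[0,1]$, so that the candidate flow $\tilde\eta$ is genuinely admissible (in particular has the prescribed marginals) and so that every intermediate estimate closes without the corrector degenerating. Once $\tilde\eta$ is in hand and its action has been bounded, comparing with $\A(\nu,\rho^\eps)$ produces \eqref{estimateAnurhoeps}. The particular case $\rho=\lambda$ gives $\rho^\eps\equiv 1$ and $N(\lambda)=0$, while \eqref{conditionlemma1} is automatic from \eqref{finitediameter}, recovering \eqref{estimateAnu}.
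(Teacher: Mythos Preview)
Your high-level strategy and your list of ingredients (optimal coupling between $\mu$ and $\nu$, mollification by $\psi^\eps$, Dacorogna--Moser correction) match the paper's. The decomposition you choose, however, is different, and in your boundary pieces (b)/(c) the argument becomes vague and, as written, does not close.

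In the paper the three operations are applied in a different order: \emph{first} perform the affine endpoint change $\omega(t)\mapsto \omega(t)+(1-t)(X-x)+t(Y-y)$ over the whole interval $[0,1]$ (this is $T_1$); \emph{then} apply the go-out--translate--come-back map $T_2[v]$ and average over $v\sim\psi^\eps$; \emph{finally} correct the density by Dacorogna--Moser. The point of this ordering is that after $T_1\circ T_2$, for $t\in[0,\eps]$ the curve is simply $X+(t/\eps)v$, whose marginal (averaging over $v$ and over $X\sim\lambda$) is automatically Lebesgue. The density defect therefore lives only on $[\eps,1-\eps]$, where the mollification scale is uniformly $\eps$; one then gets $N(Q-\rho^\eps)\leq C\,\DMK(\mu,\nu)/\eps^{d+2}$ by a single use of the Lipschitz bound on $\nabla\psi^\eps$ and $\DD^2\psi^\eps$, and Lemma~\ref{dacmos} (applied with $f=Q$, $g=\rho^\eps$) produces the factor $1+N(\rho^\eps)$ in the final estimate.

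In your ordering the central piece carries the exact marginals $\rho^\eps$ but the wrong endpoints $(x+v,y+v)$, and the entire endpoint discrepancy is pushed onto $[0,\eps]$. The straight-line gluing you propose, $t\mapsto(1-t/\eps)X+(t/\eps)(x+v)$, has at time $t=s\eps$ a marginal equal to $(\text{law of }(1-s)X+sx)\ast\psi^{s\eps}$: the effective mollification scale is $s\eps$, not $\eps$. Writing $m_t-1$ as the integral of $\psi^{s\eps}(z-(1-s)X-sx)-\psi^{s\eps}(z-X)$ against $\Gamma$, one obtains $\|m_t-1\|_{L^\infty}\lesssim s^{-d}\eps^{-(d+1)}\DMK(\mu,\nu)$ and $\Lip m_t\lesssim (s\eps)^{-(d+1)}$, both of which blow up as $s\to 0^+$. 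Thus $m\notin E$ and Lemma~\ref{dacmos} does not apply to your boundary flow as stated. A two-stage boundary (first $X\to X+v$, then $X+v\to x+v$) would repair this, but that is not what you wrote, and with that repair the Dacorogna--Moser step is applied with target density $\lambda$, so the prefactor $1+N(\rho^\eps)$ you announce no longer has an obvious source.
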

\begin{Rem}
It is quite easy to see if $\A(\mu, \rho) < + \infty$, then $N(\rho^{\eps}) < + \infty$. In fact, it is always true (use the expression of $\rho^{\eps}$ derived in the following proof to prove it) that there exists $K>0$ only depending on the dimension such that
\begin{equation}
\label{Nrhoeps}
N(\rho^{\eps}) \leq \frac{K \left( 1 + \sqrt{\A(\mu, \rho)}\right)}{\eps^{d+1}}
\end{equation}
 We keep this version because it makes it possible to use the regularity of $\rho$ instead of the one obtained by the regularization procedure if needed.  

If we do not suppose that $\sqrt{\A(\mu, \rho)} < 2 \Diam(d)$, it is possible to prove that the result \eqref{estimateAnurhoeps} is still true replacing $M$ by $M(1 + \A(\mu, \rho))$. But this proof requires to be more cautious and to handle separately the time and space derivatives in lemma \ref{dacmos}.
\end{Rem}
Theorem \ref{continuityaction} follows from this lemma.
\begin{proof}[Proof of theorem \ref{continuityaction}]
We chose 
\[
\eps = \DMK(\mu, \nu)^{1/(d+3)}.
\]
As soon as 
\[
\DMK(\mu, \nu) \leq \frac{1}{(4C)^{d+3}},
\]
equation \eqref{conditionlemma2bis} is satisfied and the result is true. The global H\"older continuity is implied by the local one because of the finite diameter property \eqref{finitediameterbistochastic}. 
\end{proof}
\begin{proof}[Proof of lemma \ref{generalcontinuityaction}]
Take $\mu$, $\nu$ and $\rho$ as in the statement of the lemma. Take $\eta$ a solution to $\Pb(\mu, \rho)$ as designed in Definition \ref{pbmurho}. Also take $\Gamma$ an optimal plan between $\mu$ and $\nu$ (in the classical sense of quadratic optimal transport, $\Gamma$ is a probability measure on $(\T^d)^4$). 

Let us explain heuristically the idea of the proof. Take $(x,y,X,Y) \in (\T^d)^4$. Let us imagine that the particles moving from $x$ to $y$ in $\eta$ follow the path $\omega$ with probability $\D \eta (\omega)$. We can modify $\omega$ by defining
\[
\xi(t) := \omega(t) + (1-t) (X-x) + t (Y-y), 
\] 
to get a path from $X$ to $Y$. This transformation is illustrated at Figure \ref{omegatoxi}. 
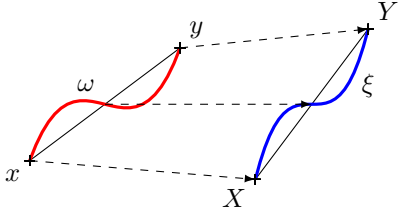
\begin{figure}
\centering
\begin{tikzpicture}
\draw (-1,-0.75) -- (1,0.75);
\draw (2,-1) -- (3.5,1);
\draw[very thick, red][domain=-1:1] plot(\x, {\x*\x*\x - \x/4});
\draw[very thick, blue][domain=2:3.5] plot(\x, {(4*\x-11)*(4*\x-11)*(4*\x-11)/27});
\draw[->, >=latex, dashed] (-1,-0.75) -- (2,-1);
\draw[->, >=latex, dashed] (1,0.75) -- (3.5,1);
\draw[->, >=latex, dashed] (0,0) -- (2.75,0);
\draw plot[mark = +] (-1,-0.75) node[below left]{$x$};
\draw plot[mark = +] (1,0.75) node[above right]{$y$};
\draw plot[mark = +] (2,-1) node[below left]{$X$};
\draw plot[mark = +] (3.5,1) node[above right]{$Y$};
\draw (-0.25,0.25) node{$\omega$};
\draw (3.5,0.25) node{$\xi$};
\end{tikzpicture}
\caption{\label{omegatoxi} Illustration of the operation that gives $\xi$ from $\omega$. The curve $\xi$ is obtained by adding to $\omega$ the only affine function for the endpoints of $\xi$ to be $X$ and $Y$.}
\end{figure}
But by definition,
\begin{equation}
\label{dMKmunu}
\DMK(\mu, \nu)^2 = \int \DD_2\big( (x,y), (X,Y)\big)^2 \D \Gamma(x,y,X,Y).
\end{equation}
(We recall that $\DD_2$ is the Euclidean distance on $(\T^d)^2$.) So as $\mu$ and $\nu$ are supposed to be close, $(x,y)$ and $(X,Y)$ are expected to be close for a large amount of $(x,y,X,Y)$ according to $\Gamma$. For such $(x,y,X,Y)$, the path $\xi$ will be a slight modification of $\omega$. We can then define a plan by charging $\xi$ with the mass $\D \Gamma(x,y,X,Y) \D \eta(\omega)$ and do this transformation for every path. We may then end up with a flow which has $\nu$ as endpoints condition and which is close to $\eta$. We can then straighten it to make it have the density $\rho$, and we get an admissible plan for $\Pb(\nu, \rho)$ whose action should not be very larger than $\A(\eta)$. In fact, to straighten the flow, we need to regularize it and that is why we will diffuse a bit the particles, giving rise to the parameter $\eps$ of the statement. This strategy consisting in regularizing a generalized flow in order to straighten its density is borrowed from the proof of Shnirelman in \cite{shn94}. However, if Shnirelman builds a straightening map by hands, we prefer to use the famous result of Dacorogna and Moser presented in \cite{dac90}. We give a simple version of this result in Lemma \ref{dacmos}.

We now start the rigorous proof. We fix $\nu$ a bistochastic measure satisfying \eqref{conditionlemma1} and a parameters $0 < \eps \leq 1/4$ . We divide the reasoning in several steps during which we will progressively modify $\eta$ to end up with an admissible flow for $\Pb(\nu,\rho^{\eps})$. At each step, we will derive an upper bound for the action of the flow that would have been built.

In the three first steps, $C$ will be a dimensional constant which may grow from line to line. It will be fixed in the end of step three.
\paragraph{Step one: change of endpoints condition.} First, as announced, we change the endpoint conditions of the solution $\eta$ to $\Pb(\mu, \rho)$ defined in Definition \ref{pbmurho}. The set $\C$ of continuous curves on $\T^d$ endowed with the supremum norm is a Polish space and the evaluation map $(e_0, e_1)$ defined by \ref{defet} is measurable, so we can use the disintegration theorem to define for $\mu$-almost all $(x,y) \in (\T^d)^2$
\begin{equation}
\label{defetaxy}
\eta^{x,y} := \eta( \bullet | \omega(0) = x, \, \omega(1) = y).
\end{equation}
For all $(x,y,X,Y) \in (\T^d)^4$, define for all curves $\omega \in \C$ the curve $T_1[x,y,X,Y](\omega)$ whose position at time $t \in [0,1]$ is
\begin{equation}
\label{defT1}
T_1[x,y,X,Y](\omega)(t) = \omega(t) + (1-t)(X-x) + t(Y-y),
\end{equation}
as already shown at Figure \ref{omegatoxi}. This curve moves $\eta^{x,y}$-almost surely from $X$ to $Y$. Then we introduce the generalized flow
\begin{equation}
\label{defeta1}
\eta_{\Gamma,T_1} := \int T_1[x,y,X,Y] \pf \eta^{x,y} \D \Gamma(x,y,X,Y),
\end{equation}
using notation \eqref{defpushforward}. (We recall that $\Gamma$ is an optimal plan between $\mu$ and $\nu$.)

Let us check the endpoints condition of $\eta_{\Gamma,T_1}$. For any $\alpha: (\T^d)^2 \to \R$ which is smooth,
\begin{align*}
\int \alpha(\omega(0), \omega(1)) \D \eta_{\Gamma,T_1}(\omega) &= \int \left( \int \alpha (X, Y ) \D \eta^{x,y}(\omega) \right) \D \Gamma(x,y,X,Y) \\
&=\int \alpha(X,Y) \D \Gamma(x,y,X,Y) \\
&= \int \alpha(X,Y) \D \nu(X,Y).
\end{align*}
The endpoints condition of $\eta_{\Gamma,T_1}$ is $\nu$. Let us now compute the action of $\eta_{\Gamma,T_1}$. For all $(x,y,X,Y) \in (\T^d)^4$ and $\omega \in \C$, using the triangle inequality in $L^2$ and \eqref{finiteactioncurve}, we get
\begin{align*}
A\big(T_1[x,y,X,Y](\omega) \big) &= \frac{1}{2} \int_0^1 |\dot{\omega}(t) - (X-x) + (Y-y)|^2 \D t \\
&=\frac{1}{2} \Big\| \dot{\omega}- (X-x) + (Y-y) \Big\|_{L^2([0,1])}^2 \\
&\leq  \frac{1}{2} \Big\{ \|  \dot{\omega} \|_{L^2([0,1])} + \|  (X-x) - (Y-y) \|_{L^2([0,1])} \Big\}^2\\
&=\Big\{\frac{1}{\sqrt{2}}  \|  \dot{\omega} \|_{L^2([0,1])} + \frac{1}{\sqrt{2}} \|  (X-x) - (Y-y) \|_{L^2([0,1])} \Big\}^2\\
&\leq  \Big\{ \sqrt{A(\omega)} + \frac{1}{\sqrt{2}}\DD_2\big( (x,y), (X,Y) \big) \Big\}^2.
\end{align*}
Integrating with respect to $\eta^{x,y}$ and then $\Gamma$, using \eqref{redefactionflow}, \eqref{dMKmunu} and \eqref{defeta1}, we get
\begin{align*}
\A(\eta_{\Gamma,T_1}) &\leq \left\| \sqrt{A(\omega)} + \frac{1}{\sqrt{2}} \DD_2\big( (x,y), (X,Y) \big) \right\|_{L^2(\D \eta^{x,y}(\omega) \D \Gamma(x,y,X,Y))}^2 \\
&\leq \left\{ \Big\| \sqrt{A(\omega)} \Big\|_{L^2(\D\eta(\omega))} + \frac{1}{\sqrt{2}} \Big\| \DD_2\big( (x,y), (X,Y) \big) \Big\|_{L^2(\D \Gamma(x,y,X,Y))}\right\}^2\\
&\leq \left( \sqrt{\A(\eta)} + \frac{\DMK(\mu, \nu)}{\sqrt{2}} \right)^2\\
&= \A(\eta) + \sqrt{2 \A(\eta)} \DMK(\mu, \nu) + \frac{1}{2} \DMK(\mu, \nu)^2.
\end{align*}
Thus, using \eqref{finitediameterbistochastic} and \eqref{conditionlemma1}, we find $C>0$ such that,
\begin{equation}
\label{estim1}
\A(\eta_{\Gamma,T_1}) - \A (\eta) \leq C \DMK(\mu, \nu).
\end{equation}
\paragraph{Step two: regularization.} To regularize the marginals in space, we define for all $v \in \R^d$ and for any curve $\omega \in \C$ the curve $T_2[v](\omega)$ whose position at time $t$ is
\begin{equation}
\label{defT2}
T_2[v](\omega)(t) = \left\{ \begin{aligned}
&\omega(0) + \frac{v}{\eps}t && \mbox{if } t \in [0,\eps],\\
&\omega(s) + v  \quad \mbox{with } s = \frac{t-\eps}{1-2\eps} && \mbox{if } t\in [\eps, 1 - \eps],\\
&\omega(1) + \frac{1-t}{\eps} v && \mbox{if } t \in [1-\eps, 1]. 
\end{aligned} \right.
\end{equation}
Then we define 
\begin{equation}
\label{defeta2}
\eta_{\Gamma,T_1,T_2} := \int \Big( T_2[v] \pf \eta_{\Gamma,T_1} \Big) \psi^{\eps}(v) \D v,
\end{equation}
where $\psi^{\eps}$ is the cutoff function \eqref{defpsieps} used in Definition \eqref{defrhoeps} of $\rho^{\eps}$. As for all $v$, $T_2[v]$ fixes the endpoints of the curves, the endpoints condition of $\eta_{\Gamma,T_1,T_2}$ is still $\nu$. Let us compute its action. For all $v \in \R^d$ and $\omega \in \C$, 
\[
A(T_2[v](\omega)) = \frac{|v|^2}{\eps} + \frac{1}{2(1-2\eps)}\int_0^1 |\dot{\omega}(t)|^2 \D t.
\]
Integrating with respect to $\eta_{\Gamma,T_1}$ and then $\psi^{\eps}(v) \D v$, and using \eqref{defrhoeps} and $\eps \leq 1/4$, we get
\begin{align*}  
\A(\eta_{\Gamma,T_1,T_2}) &= \int \frac{|v|^2}{\eps} \psi^{\eps}(v) \D v + \frac{1}{1-2 \eps}\A(\eta_{\Gamma,T_1})\\
&\leq \eps \int |w|^2\psi(w) \D w + (1 + 4 \eps) \A(\eta_{\Gamma,T_1}).
\end{align*}
Subsequently, using \eqref{estim1} and \eqref{finitediameterbistochastic}, we easily find $C$ such that
\begin{equation}
\label{estim2}
\A(\eta_{\Gamma,T_1,T_2}) - \A(\eta_{\Gamma,T_1}) \leq C\eps.
\end{equation}
\paragraph{Step three: study of the density of $\boldsymbol{\eta_{\Gamma,T_1,T_2}}$.} We define $Q= (Q(t,z))$ as the "density" of $\eta_{\Gamma,T_1,T_2}$. More explicitly, for all $t$, $Q(t, \bullet) := e_t {}\pf \eta_{\Gamma,T_1,T_2}$ so that by \eqref{defpushforward}, for all $t$ and all test function $\alpha$ on $\T^d$,
\begin{equation}
\label{defQ}
\int \alpha (\omega(t)) \D \eta_{\Gamma, T_1, T_2}(\omega) =  \int \alpha(z) Q(t,z) \D z.
\end{equation}

 In this paragraph, we will bound the quantity $N(Q -\rho^{\eps})$. We recall that $N$ is defined in \eqref{defN}. In the following computations, we will use the notations $\D \eta^{x,y}$, $\D \Gamma$, $s$ and $\xi(s)$ as abbreviations for $\D \eta^{x,y}(\omega)$, $\D \Gamma(x,y,X,Y)$, $(t-\eps)/(1-2 \eps)$ and $\omega(s) + (1-s)(X-x) + s(Y-y)$ respectively. 

First of all, if $t \in [0,\eps] \cup [1-\eps, 1]$, $Q(t, \bullet) \equiv 1$ (like $\rho^{\eps}$). Indeed, if $t \in [0, \eps]$ and if $\alpha$ is a test function on $\T^d$, by \eqref{defT1}, \eqref{defeta1}, \eqref{defT2} \eqref{defeta2} and \eqref{defQ},
\begin{align*}
\int \alpha(z) Q(t,z) \D z &= \int \hspace{-5pt} \int \psi^{\eps}( v ) \alpha\left(X + \frac{t}{\eps} v \right) \D v \D \Gamma \\
&=  \int \hspace{-5pt} \int \psi^{\eps}( v) \alpha\left(X + \frac{t}{\eps} v \right) \D X \D v \\
&= \int \psi^{\eps}(v) \left( \int \alpha\left(X + \frac{t}{\eps} v \right) \D X \right) \D v \\
&= \int \alpha(z) \D z.
\end{align*}
The cases $t \in [1-\eps, 1]$ are treated similarly.

If $t \in [\eps, 1 - \eps]$, we have by \eqref{defT1}, \eqref{defeta1}, \eqref{defT2} \eqref{defeta2} and \eqref{defQ},
\begin{align*}
\int \alpha(z) Q(t,z) \D z &= \int \hspace{-5pt} \int \psi^{\eps}(v )\left\{ \int \alpha\big( \xi(s) + v  \big) \D \eta^{x,y}\right\}\D v \D \Gamma\\
&=  \int \hspace{-5pt} \int \left\{\int \psi^{\eps}(v) \alpha\big( \xi(s) + v  \big) \D v\right\} \D \eta^{x,y} \D \Gamma\\
&=  \int \hspace{-5pt} \int \left\{ \int \psi^{\eps}\Big( z - \xi(s)\Big) \alpha( z ) \D z\right\}\D \eta^{x,y} \D \Gamma,\\
\end{align*} 
which implies that for all $z \in \T^d$, 
\[
 Q(t,z) = \int \hspace{-5pt} \int \psi^{\eps}\Big( z - \xi(s) \Big)\D \eta^{x,y} \D \Gamma.
\]
Then, consider the definition \eqref{defrhoeps} of $\rho^{\eps}$. On the one hand, $\rho$ is the density of $\eta$, that is for all $t$, $\rho_t = e_t {}\pf \eta$. On the other hand, both the endpoints condition of $\eta$ and the first marginal of $\Gamma$ is $\mu$. So in particular, by \eqref{defetaxy}, for all test function $\gamma$ on $\C$,
\begin{align*}
\int \gamma(\omega) \D \eta(\omega) &= \int \hspace{-5pt}  \int \gamma(\omega) \D \eta^{x,y}(\omega) \D \mu(x,y) \\
&= \int \hspace{-5pt}  \int \gamma(\omega) \D \eta^{x,y}(\omega) \D \Gamma(x,y,X,Y)\\
&= \int \hspace{-5pt}  \int \gamma(\omega) \D \eta^{x,y} \D \Gamma \quad \mbox{(with our notations)}.
\end{align*}
Consequently, if $t \in [\eps, 1-\eps]$, we have
\begin{align*}
\rho^{\eps}(t,z) &=\int \psi^{\eps}(z-x) \D \rho_s(x) \\
&= \int \psi^{\eps}\Big( z - \omega(s) \Big)\D \eta(\omega) \\
&=\int \hspace{-5pt} \int \psi^{\eps}\Big( z - \omega(s) \Big)\D \eta^{x,y} \D \Gamma.
\end{align*}

As a consequence, $Q$ and $\rho^{\eps}$ are equal if $t \in [0,\eps]\cup[1-\eps, 1]$, and elsewise, for all $(t, z) \in [\eps,1-\eps] \times D$, we obtain
\begin{equation}
\label{Q-rho}
Q(t,z) - \rho^{\eps}(t,z) = \int \hspace{-5pt} \int \Big\{ \psi^{\eps}\Big( z - \xi(s) \Big) - \psi^{\eps}\Big( z - \omega(s) \Big) \Big\} \D \eta^{x,y} \D \Gamma. 
\end{equation}
If we derive this expression with respect to space, we get
\[
\nabla Q(t,z) - \nabla \rho^{\eps}(t,z) = \int \hspace{-5pt} \int \Big\{\nabla \psi^{\eps}\Big( z - \xi(s) \Big) - \nabla\psi^{\eps}\Big( z - \omega(s) \Big) \Big\} \D \eta^{x,y} \D \Gamma 
\]
Thus, 
\begin{align*}
|\nabla Q(t,z) &- \nabla \rho^{\eps}(t,z)|\\
&\leq \| \DD^2 \psi^{\eps}\|_{\infty} \int \hspace{-5pt} \int |\xi(s) - \omega(s)|\D \eta^{x,y} \D \Gamma \\
&=\frac{\| \DD^2 \psi\|_{\infty}}{\eps^{d+2}} \int  |(1-s)(X-x) + s(Y-y)| \D \Gamma(x,y,X,Y)\\
&\leq \frac{\| \DD^2 \psi\|_{\infty}}{\eps^{d+2}} \int \sqrt{|X-x|^2 + |Y-y|^2} \D \Gamma(x,y,X,Y)\\
&=\leq \frac{\| \DD^2 \psi\|_{\infty}}{\eps^{d+2}} \int \DD_2 ((x,y), (X,Y)) \D \Gamma(x,y,X,Y)\\
&\leq \frac{\| \DD^2 \psi\|_{\infty}}{\eps^{d+2}} \DMK(\mu, \nu) \quad \mbox{(by the Cauchy-Schwarz inequality)}\\
&\leq \frac{C}{\eps^{d+2}} \DMK(\mu, \nu).
\end{align*}
If now we derive \eqref{Q-rho} with respect to time, we get
\begin{align*}
\partial_t Q(t,z) - \partial_t \rho^{\eps}(t,z)  &= \int \hspace{-5pt} \int \Big\{  \nabla \psi^{\eps}( z - \omega(s) ) - \nabla \psi^{\eps}( z - \xi(s)) \Big\} \! \cdot  \dot{\omega}(s) \D \eta^{x,y} \D \Gamma \\
& \quad +\int \hspace{-5pt} \int \nabla \psi^{\eps}( z - \xi(s) )\cdot \left(\dot{\omega}(s) - \dot{\xi}(s)\right) \D \eta^{x,y} \D \Gamma.
\end{align*}
On the one hand, for almost all $t$,
\begin{align*}
\bigg\| \int \hspace{-5pt} \int & \Big\{   \nabla \psi^{\eps}( \bullet - \xi(s)) - \nabla \psi^{\eps}\left( \bullet - \omega(s) \right) \Big\} \cdot \dot{\omega}(s) \D \eta^{x,y} \D \Gamma \bigg\|_{L^{\infty}(\T^d)} \\
 &\leq \| \DD^2 \psi^{\eps} \|_{\infty} \int \hspace{-5pt} \int \left| \xi(s) - \omega(s) \right| \left| \dot{\omega}(s) \right| \D \eta^{x,y} \D \Gamma\\
 &\leq \frac{\|\DD^2 \psi\|_{\infty}}{\eps^{d+2}} \int \hspace{-5pt} \int \left| (1-s)(X-x) + s(Y-y) \right| \left| \dot{\omega}(s) \right| \D \eta^{x,y} \D \Gamma\\
 &\leq \frac{\|\DD^2 \psi\|_{\infty}}{\eps^{d+2}}\left( \int |\dot{\omega}(s)|^2 \D \eta \right)^{1/2} \left( \int \DD_2\big( (x,y),(X,Y) \big)^2 \D \Gamma \right)^{1/2}.
\end{align*}
We take the $L^2$ norm of this expression in space:
\begin{align*}
\bigg(\int_0^1 \bigg\| \int \hspace{-5pt} \int & \Big\{   \nabla \psi^{\eps}( \bullet - \xi(s)) - \nabla \psi^{\eps}\left( \bullet - \omega(s) \right) \Big\} \cdot \dot{\omega}(s) \D \eta^{x,y} \D \Gamma \bigg\|_{L^{\infty}(\T^d)}^2 \D t\bigg)^{1/2} \\
&\leq \frac{C}{\eps^{d+2}} \DMK(\mu, \nu)
\end{align*}
On the other hand, for almost all $t$,
\begin{align*}
\bigg\| \int \hspace{-5pt} \int \nabla &\psi^{\eps}(\bullet - \xi(s) )\cdot \left(\dot{\xi}(s) - \dot{\omega}(s)\right) \D \eta^{x,y} \D \Gamma\bigg\|_{L^{\infty}(\T^d)} 
\\
&= \bigg\| \int \hspace{-5pt} \int \nabla \psi^{\eps}( \bullet - \xi(s))\cdot \big((X-x) -(Y-y)\big) \D \eta^{x,y} \D \Gamma\bigg\|_{L^{\infty}(\T^d)}\\
&\leq \|\nabla \psi^{\eps} \|_{\infty} \DMK(\mu, \nu) \leq \frac{ C}{\eps^{d + 1}}\DMK(\mu, \nu).
\end{align*}
So we get:
\[
\left(\int_0^1 \| \partial_t Q - \partial_t \rho^{\eps}\|_{L^{\infty}(\T^d)}^2 \D t \right)^{1/2} \leq \frac{C}{\eps^{d+2}} \DMK(\mu, \nu).
\]
Gathering these two estimates, and using the definition of $N$ \eqref{defN}, we obtain
\begin{equation}
\label{Q-rhoE}
N(Q - \rho^{\eps} ) \leq \frac{C}{\eps^{d+2}} \DMK(\mu, \nu).
\end{equation}

From now on, the constant $C$ will be fixed and we suppose that \eqref{conditionlemma2} is satisfied. We will call $M$ a new constant which will be "sufficiently large".
\paragraph{Step four: Dacorogna and Moser's lemma.} Now we want to use the following lemma which is a simple version of the main result in \cite{dac90}. We give an elementary proof in the appendix.
\begin{Lem}
\label{dacmos}
Let $f$ and $g$ be in the space $E$ (defined in \eqref{finitesuplipf}, \eqref{AC2Linfty}), greater than $1/2$, and such that for all $t \in [0,1]$,
\[
\int f(t,x) \D x = \int g(t,x) \D x = 1. 
\]
There exists $\Psi \in E$ and $M>0$ only depending on the dimension such that
\begin{gather}
\label{transport}
\forall t \in [0,1], \quad \Psi(t, \bullet) \pf( f(t, \bullet) \cdot \lambda ) = g(t,\bullet) \cdot \lambda,\\
\label{samedensitynomove} \forall t \in [0,1], \quad \mbox{if } f(t,\bullet) = g(t, \bullet), \mbox{ then } \Psi(t, \bullet) = \Id,\\
\label{psiclosetoid}
N(\Psi - \Id) \leq \exp\Big[ M \big(1 + \max\big\{N(f), N(g)\big\}\big) N(g-f) \Big] - 1.
\end{gather}
In particular, for all $\Upsilon>0$ there is $M>0$ only depending on $d$ and $\Upsilon$ such that as soon as
\[
\big(1 + \max\big\{N(f), N(g)\big\}\big) N(g-f) \leq \Upsilon,
\]
then
\[
N(\Psi - \Id) \leq M \big(1 + \max\big\{N(f), N(g)\big\}\big) N(g-f).
\]
\end{Lem}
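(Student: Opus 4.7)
The plan is to invoke the classical Dacorogna-Moser deformation flow, adapted to the $t$-parametrized setting. For $s \in [0,1]$, set $h_s(t,x) := (1-s) f(t,x) + s g(t,x)$; this remains bounded below by $1/2$ and has unit integral in $x$ at every $(s,t)$. For each $t$, solve the zero-mean Poisson equation $\Delta_x u(t,\bullet) = g(t,\bullet) - f(t,\bullet)$ on $\T^d$, which is well-posed since $g-f$ has vanishing mean. Define the velocity field $w(s,t,x) := -\nabla_x u(t,x) / h_s(t,x)$; a direct computation yields the continuity equation $\partial_s h_s + \Div_x(h_s w) = 0$. For each fixed $t$, let $\phi_s(t,x)$ be the flow of $w$ starting from the identity at $s=0$, and set $\Psi := \phi_1$. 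The continuity equation then implies \eqref{transport} by a standard push-forward argument, while \eqref{samedensitynomove} is immediate: if $f(t,\bullet) = g(t,\bullet)$ then $u(t,\bullet) \equiv 0$, hence $w(s,t,\bullet) \equiv 0$ and $\phi_s(t,\bullet) = \Id$.

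The heart of the proof is the quantitative estimate \eqref{psiclosetoid}. Writing $u = G *_x (g-f)$ for $G$ the Green's function of $\Delta$ on $\T^d$, and exploiting that $\nabla G$ is globally $L^1$ on $\T^d$ in every dimension, an integration by parts gives
\[
\|\nabla_x u(t,\bullet)\|_\infty + \|\nabla_x^2 u(t,\bullet)\|_\infty \leq M \, N(g - f),
\]
where the zero-mean condition on $g-f$ together with a Poincaré-type argument (continuity plus vanishing mean forces a vanishing point, hence $\|g-f\|_\infty \leq \sqrt{d}\,\Lip(g-f)$) reduces both terms to $N(g-f)$. Combining this with $h_s \geq 1/2$ and $\Lip_x h_s \leq \max\{N(f),N(g)\}$, a short computation yields
\[
\Lip_x w(s,t,\bullet) \leq M \bigl(1 + \max\{N(f),N(g)\}\bigr) N(g-f).
\]
A Gronwall argument on the ODE for $\phi_s$ then produces $\Lip_x(\Psi-\Id)(t,\bullet) \leq \exp\bigl[\int_0^1 \Lip_x w(s,t,\bullet) \D s\bigr] - 1$, which is of the required form. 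The time part of $N(\Psi - \Id)$ is handled by differentiating the flow ODE in $t$ and running an analogous Gronwall argument, this time using the elliptic estimate on $\partial_t u$ (convolving $G$ against $\partial_t(g-f)$) to control $\|\partial_t w(s,t,\bullet)\|_\infty$ in $L^2_t$. The strengthened statement in the second part of the lemma follows from the elementary inequality $e^x - 1 \leq C(\Upsilon) \, x$ for $x \in [0, \Upsilon]$.

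The main obstacle is the borderline character of the elliptic regularity: one needs Lipschitz control on $\nabla u$ starting from Lipschitz (not H\"older) data $g-f$, so a direct Schauder estimate is unavailable at this endpoint. The convolutional argument above succeeds because $\nabla G \in L^1(\T^d)$ is enough to transfer a full derivative from $g-f$ onto $\nabla_x^2 u$ in sup norm, which is exactly the right-hand-side structure needed in the Gronwall step. A second delicate point is the bookkeeping for the time derivative of $\phi_s$, which depends on $t$ both through the flow itself and explicitly through $w(s,t,\bullet)$; the two contributions must be kept separate when running Gronwall so as to end up with an $L^2_t L^\infty_x$ bound on $\partial_t (\Psi - \Id)$ rather than only an $L^1_t$ one.
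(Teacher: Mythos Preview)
Your proposal is correct and follows essentially the same route as the paper: interpolate $h_s=(1-s)f+sg$, solve the Poisson equation for $g-f$, run the Dacorogna--Moser flow along $w=-\nabla u/h_s$, and close with Gronwall. The only cosmetic difference is in how the Lipschitz bound on $\nabla u$ is obtained: the paper applies the first-order estimate $\|\nabla U\|_\infty\leq C\|F\|_\infty$ (its Lemma~\ref{solpossionlip}, which is exactly your $\nabla G\in L^1$) to the translated differences $\Theta(x)=\theta(t_2,x_2+x)-\theta(t_1,x_1+x)$, thereby handling space and time increments in one stroke, whereas you move one derivative onto each convolution factor and treat $\partial_t$ separately---these are equivalent packagings of the same estimate.
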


The density $\rho$ is greater than $3/4$, thus equation \eqref{defrhoeps} shows that it is also the case for $\rho^{\eps}$. In particular, $\rho^{\eps}$ is greater than $1/2$. Furthermore, as a consequence of \eqref{conditionlemma2},
\[
\frac{C}{\eps^{d+2}} \DMK(\mu, \nu) \leq \frac{1}{4}.
\]
So by \eqref{Q-rhoE}, $Q$ defined in \eqref{defQ} is also greater than $1/2$. 

In addition, still by \eqref{Q-rhoE},
\[
1 + \max (N(\rho^{\eps}) , N(Q)) \leq \frac{5}{4} (1 + N(\rho^{\eps})).
\]

So we can apply lemma \ref{dacmos} with $f = \rho^{\eps}$, $g = Q$ and $\Upsilon = 1/4$, and find $\Psi = (\Psi(t,z))$ such that for all $t \in [0,1]$, and for $M>0$ sufficiently large, using \eqref{conditionlemma2},
\begin{gather}
\label{psiQrho} \Psi(t,\bullet) \pf Q(t, \bullet) = \rho^{\eps},\\
\label{Npsi-Id} N(\Psi - \Id ) \leq M (1 + N(\rho^{\eps}))\frac{\DMK(\mu, \nu)}{\eps^{d+2}}.
\end{gather}

Moreover, if $t \in [0,\eps] \cup [1-\eps, 1]$ and $z \in D$, by \eqref{samedensitynomove}, $\Psi(t,z) = z$. Now to a curve $\omega \in \C$, we associate the curve $T_3(\omega) := \Big(t \mapsto \Psi(t, \omega(t))\Big)$ and we define
\[
\eta_{\Gamma, T_1, T_2, T_3} := T_3 {}\pf \eta_{\Gamma,T_1,T_2}.
\]
It is clear that $\eta_{\Gamma,T_1,T_2,T_3}$ has $\nu$ as endpoints condition. Let us show that its density is $\rho^{\eps}$. If $t \in [0,\eps] \cup [1-\eps, 1]$, this is obvious. Else, if $t \in [\eps, 1-\eps]$ and if $\alpha$ is a test function on $\T^d$,
\begin{align*}
\int \alpha(\omega(t)) \D \eta_{\Gamma,T_1,T_2,T_3}(\omega) &= \int \alpha\Big[\Psi(t, \omega(t))\Big] \D \eta_{\Gamma,T_1,T_2} (\omega) \\
&= \int \alpha\big(\Psi(t, z)\big) Q(t,z) \D z\quad \mbox{(by definition \eqref{defQ} of }Q\mbox{)}\\
&=\int \alpha(Z) \rho^{\eps}(t,Z) \D Z \quad \mbox{(by \eqref{psiQrho})}.
\end{align*}
It remains to compute the action of $\eta_{\Gamma,T_1,T_2,T_3}$. If $\omega$ is a curve in $\C$, using the triangle inequality in $L^2$ as in step one,
\begin{align*}
A( T_3(\omega) ) &= \frac{1}{2} \int_0^1 \left| \frac{\D}{\D t} \Psi(t, \omega(t))\right|^2 \D t\\
&= \frac{1}{2} \int_0^1 \left| \dot{\omega}(t) + \partial_t \Psi(t, \omega(t)) + \Big(\D \Psi(t, \omega(t)) - \Id\Big)\cdot \dot{\omega}(t)  \right|^2 \D t\\
&\leq \left(\sqrt{A(\omega)} +  N(\Psi-\Id) \Big( 1 + \sqrt{A(\omega)} \Big) \right)^2.
\end{align*}
As a consequence, still by the same technique
\begin{align*}
\A(\eta_{\Gamma,T_1,T_2,T_3}) &= \int A (T_3(\omega)) \D \eta_{\Gamma,T_1,T_2}(\omega) \\
&\leq \int \left(\sqrt{A(\omega)} +  N(\Psi - \Id) \Big( 1 + \sqrt{A(\omega)} \Big) \right)^2 \D \eta_{\Gamma,T_1,T_2}(\omega)\\
&\leq \left( \sqrt{\A(\eta_{\Gamma,T_1,T_2})} + N(\Psi - \Id) \left( 1 + \sqrt{\A(\eta_{\Gamma,T_1,T_2}}\right) \right)^2\\
&\leq \A(\eta_{\Gamma,T_1,T_2}) + C N(\psi - \Id)\\
&\leq \A(\eta_{\Gamma,T_1,T_2}) + CM (1 + N(\rho^{\eps}))\frac{\DMK(\mu, \nu)}{\eps^{d+2}} \quad \mbox{(by \eqref{Npsi-Id})}.
\end{align*}
So taking $M\leftarrow CM$,
\begin{equation}
\label{estim3}
\A(\eta_{\Gamma,T_1,T_2,T_3}) - \A(\eta_{\Gamma,T_1,T_2}) \leq M (1 + N(\rho^{\eps}))\frac{\DMK(\mu, \nu)}{\eps^{d+2}}.
\end{equation}
We get the result of Lemma \ref{generalcontinuityaction}, namely \eqref{estimateAnurhoeps}, by summing \eqref{estim1}, \eqref{estim2} and \eqref{estim3}.
\end{proof}
\section{Discontinuity of the optimal action in the extended model}
In \cite{amb09}, Ambrosio and Figalli actually worked in a slightly more general context. In their model, the endpoints condition is prescribed by two bistochastic measures $\mu$ and $\nu$, and a flow is a Borel measure on $\overline{\C} := \T^d \times \C$. We call $\pi$ and $W$ the canonical projections $\overline{\C} \to \T^d$, $\overline{\C} \to \C$ and if $t \in [0,1]$, we call $e_t$ the map $(a, \omega)\in \overline{\C} \mapsto \omega(t) \in \T^d$. The flow $H$ is admissible for the problem $\overline{\Pb}(\mu, \nu)$ if the push-forward $W \pf H$ satisfies \eqref{incompressibility} and \eqref{finiteactionflow}, and if $(\pi, e_0) \pf H = \mu$ and $(\pi, e_1) \pf H = \nu$. Under these constraints, the first marginal $\pi \pf H$ is necessarily the Lebesgue measure, and it can be useful to use the disintegration lemma to write $H$ as $\lambda \otimes \eta^a$, which means that $(\eta^a)_{a \in \T^d}$ is a measurable family of Borel measures on $\C$ satisfying for all test function $\alpha$ on $\overline{\C}$,
\[
\int_{\overline{\C}} \alpha (a, \omega) \D H(a, \omega) = \int_{\T^d} \left(\int_{\C} \alpha(a, \omega) \D \eta^a(\omega) \right) \D a.
\]
 This time, the action of a flow $H$ is define by 
 \[
 \overline{\A}(H) := \int \frac{1}{2} \int_0^1 |\dot{\omega}(t)|^2 \D t \D H(a,\omega) = \A(W \pf H),  
 \]
 and once again, a solution to $\overline{\Pb}(\mu, \nu)$ is an admissible flow with minimal action, and we call this optimal action $\overline{\A}(\mu, \nu)$. 

The particles are not only indexed by their initial and final positions anymore, but also by an additional variable in $\T^d$. Notice that the choice of $\T^d$ is quite arbitrary, and we could have chosen any other Polish space. As discussed in \cite{brenier2003extended}, this is the natural way to generalize Brenier's model to obtain something closer to the usual Lagrangian formulation in classical mechanics, with an initial and a final state. This problem inherits a satisfactory structure: for example, it provides a metric on the set of bistochastic measures, which is in addition invariant under a measure preserving change of indices. This property is the analogue in this context of the right-invariance property on the formal Lie group of measure preserving diffeomorphisms described in \cite{arn99}. 

If $\mu = (\Id, \Id) \pf \lambda=: \Lambda$, $\overline{\Pb}(\mu, \nu)$ is exactly $\Pb(\nu)$. Indeed, if $\eta$ is admissible for $\Pb(\nu)$, then $(e_0, \Id) \pf \eta$ is admissible for $\overline{\Pb}(\Lambda, \nu)$ and has the same action as $\eta$, and reciprocally, if $H$ is admissible for $\overline{\Pb}(\Lambda, \nu)$, $W \pf H$ is admissible for $\Pb(\nu)$ and has the same action as $H$. (Just remark that for $H$-almost all $(a, \omega)$, $\omega(0) = a$.) The same kind of arguments shows that if $\nu = \Lambda $, $\overline{\Pb}(\mu, \nu)$ is exactly $\Pb(\mu^{\dagger})$ where $\mu^{\dagger}$ is obtained from $\mu$ by exchanging the two coordinates in $\T^d \times \T^d$. In particular, by theorem \ref{continuityaction}, $\overline{\A}(\Lambda, \bullet)$ and $\overline{\A}(\bullet , \Lambda)$ are H\"older continuous.

On the other side, if $\mu \neq \Lambda$, $\overline{\A}(\mu, \bullet)$ is not even continuous in general, and there is a corresponding statement with final states instead of initial states.   

We will give an example in dimension 1 and in $[0,1]$ instead of $\T^1$ to be more visual, but the same example works in $\T^1$. We define $\mu_{\infty}$ by the formula
\begin{align*}
\forall \alpha\in &C^0([0,1]^2),\\
 &\int \alpha(x,y) \D \mu_{\infty}(x,y) = \frac{1}{2}\int_0^1 \left\{ \alpha\left( x, \frac{x}{2} \right) + \alpha \left( x, \frac{1}{2} + \frac{x}{2}\right)\right\}\D x,
\end{align*}
and for all $n \in \N^*$, $\mu_n$ is defined by the formula
\begin{align*}
\forall \alpha& \in C^0([0,1]^2),\\
 \int \alpha(x,&y) \D \mu_n(x,y) \\
 &=\sum_{i=0}^{n-1} \int_0^{1/2n} \left\{ \alpha \left( \frac{i}{n} + x, \frac{i}{2n} + x \right) + \alpha \left( \frac{2i+1}{2n} + x , \frac{1}{2} + \frac{1}{2n} + x \right) \right\} \D x.
\end{align*}
We illustrate these measures in figure \ref{defmu}. 
\begin{figure}
\label{defmu}
\centering
\begin{tikzpicture}
\draw[step=0.5cm, gray, very thin, dashed] (0, 0) grid (4, 4);
\draw (-0.15,-0.3) node{$0$};
\draw (2,-0.3) node{$1/2$};
\draw (4,-0.3) node{$1$};
\draw (-0.4,2) node{$1/2$};
\draw (-0.15,4) node{$1$};
\draw[thick, red] (0,0) -- (4,2);
\draw[thick, red] (0,2) -- (4,4);
\end{tikzpicture}
\hspace{1.4cm}
\begin{tikzpicture}
\draw (-0.15,-0.3) node{$0$};
\draw (2,-0.3) node{$1/2$};
\draw (4,-0.3) node{$1$};
\draw (-0.4,2) node{$1/2$};
\draw (-0.15,4) node{$1$};
\draw (4.6,2) node{};
\draw[step=0.5cm, gray, very thin, dashed] (0, 0) grid (4, 4);
\draw[thick, red] (0,0) -- (0.5,0.5);
\draw[thick, red] (1,0.5) -- (1.5,1);
\draw[thick, red] (2,1) -- (2.5,1.5);
\draw[thick, red] (3,1.5) -- (3.5,2);
\draw[thick, red] (0.5,2) -- (1,2.5);
\draw[thick, red] (1.5,2.5) -- (2,3);
\draw[thick, red] (2.5,3) -- (3,3.5);
\draw[thick, red] (3.5,3.5) -- (4,4);
\end{tikzpicture}
\caption{Illustration of $\mu_{\infty}$ (on the left) and of $\mu_n$ (for $n=4$, on the right). The corresponding measures are uniform on the red lines.}
\end{figure}
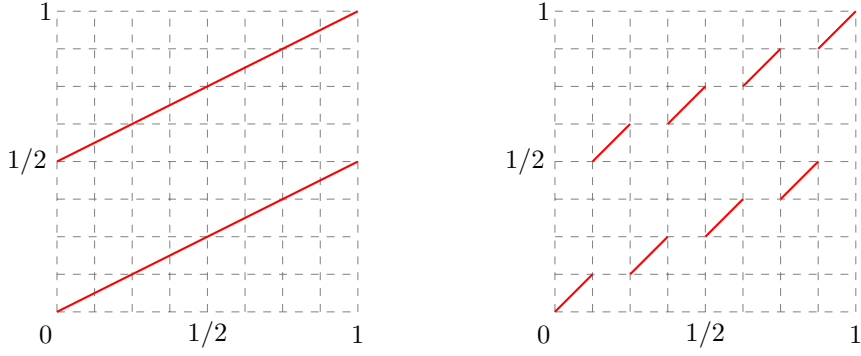
It is clear that in the sense of narrow convergence,
\[
\lim_{n \to \infty}\mu_n = \mu_{\infty}. 
\]
It is also clear that $\overline{\A}(\mu_{\infty}, \mu_{\infty}) = 0$. But now take $n \in \N^*$ and $H$ an admissible flow for $\overline{\Pb}(\mu_{\infty}, \mu_n)$ and take its representation under the form $\lambda \otimes \eta^a$. Chose $i \in \{0, \dots, n-1\}$. For $\lambda$-almost all $a \in [i/n, (2i + 1)/2n]$, the initial marginal $e_0 {} \pf \eta^a$ equals 
\[
\frac{1}{2} \Big( \delta_{a/2} + \delta_{(1+a)/2} \Big),
\]
and the final marginal $e_1 {}\pf \eta^a$ equals
\[
\delta_{a -(i/2n)}.
\]
In particular, one half of the particles with label $a$ start from
\[
\frac{1}{2} + \frac{a}{2} \geq \frac{1}{2} + \frac{i}{2n}
\]
and arrive at
\[
a - \frac{i}{2n} \leq \frac{i}{2n} +\frac{1}{2n}.
\]
In particular, for $\lambda$-almost all $a \in [i/n, (2i + 1)/2n]$,
\begin{equation}
\label{Aetaabig}
\A(\eta^a) \geq \frac{1}{2} \times \frac{1}{2} \left( \frac{1}{2} + \frac{i}{2n} -\frac{i}{2n} - \frac{1}{2n} \right)^2 = \frac{1}{16}\left( 1 - \frac{1}{n} \right)^2.
\end{equation}
For the same reasons, \eqref{Aetaabig} is also valid for $\lambda$-almost all $a \in [(2i + 1)/2n, (i+1)/n]$. As the result does not depend on $i$, in fact \eqref{Aetaabig} is valid for $\lambda$-almost all $a \in [0,1]$, and integrating over $[0,1]$ leads to
\[
\overline{\A}(H) \geq  \frac{1}{16}\left( 1 - \frac{1}{n} \right)^2.
\]
Taking the infimum in the left hand side, we get
\[
\overline{\A}(\mu_{\infty}, \mu_n) \geq  \frac{1}{16}\left( 1 - \frac{1}{n} \right)^2, 
\]
and subsequently,
\[
\liminf_{n \to \infty} \overline{\A}(\mu_{\infty}, \mu_n) \geq \frac{1}{16} > 0 = \overline{\A}(\mu_{\infty}, \mu_{\infty}).
\]
Therefore, as announced, $\overline{\A}(\mu_{\infty}, \bullet)$ is not continuous.
\section{H\"older continuity of the pressure field}
This section is entirely devoted to the proof of Theorem \ref{stabilitypressure}. Once again we start with giving the ideas of the proof. We recall that expression \eqref{pmultlag} lets us interpret the pressure field as the Lagrange multiplier associated to the incompressibility constraint. In other terms, if $(\mathcal{E}, \| \bullet \|_{\mathcal{E}})$ is a space of densities, if $\mu$ is a bistochastic measure, and if we call $p_{\mu}$ the pressure associated to the endpoint conditions $\mu$,
\[
\sup_{\| r \|_{\mathcal{E}}\leq 1} \cg p_{\mu} , r \cd
\]
is seen as the slope of the action $\A(\mu, \bullet)$ in $\mathcal{\mathcal{E}}$ at the point $\lambda$. The theorem states that the slope in $\mathcal{E} = \Div E$ does not depend too much on the endpoints condition. This is a consequence of four estimates. Given $\mu$ and $\nu$ two bistochastic measures, a direction $r\in \mathcal{E}$ and two small parameters $\eps$ and $\delta$, we will see that under certain conditions,
\begin{enumerate}
\item at the endpoints condition $\mu$, the slope in the direction $r$ is bounded from below by a quantity of type
\[
\cg p_{\mu}, r \cd \geq \frac{\A(\mu, \lambda + \delta r) - \A(\mu, \lambda)}{\delta} - M\delta;
\]
\item at the endpoints condition $\nu$, the slope in the direction $r$ is bounded from above by a quantity of type
\[
\cg p_{\nu}, r \cd \leq \frac{\A(\nu, \lambda + \delta r^{\eps}) - \A(\nu, \lambda)}{\delta} + M(\delta + \eps);
\]
\item the number $\A(\nu, \lambda + \delta r^{\eps})$ is not too large with respect to $\A(\mu, \lambda + \delta r)$, as seen in lemma \ref{generalcontinuityaction}:
\[
\A(\nu, \lambda + r^{\eps}) \leq \A(\mu, \lambda + r) + M \bigg( \eps + \big\{1 + N( r^{\eps}) \big\} \frac{\DMK(\mu, \nu) }{\eps^{d+2}} \bigg);
\]
\item the number $\A(\nu, \lambda)$ is not too small with respect to $\A(\mu, \lambda)$:
\[
\A(\nu, \lambda) \geq \A(\mu, \lambda) - M \bigg( \eps + \frac{\DMK(\mu, \nu) }{\eps^{d+2}} \bigg).
\]
\end{enumerate}
Indeed, relying on these four estimates, one can get something like
\[
\cg p_{\nu} - p_{\mu}, r \cd \leq \frac{M}{\delta}\bigg( \eps + \big\{1 + N( r^{\eps}) \big\} \frac{\DMK(\mu, \nu) }{\eps^{d+2}} \bigg) + M(\eps + \delta).
\]
Then we just have to take the good parameters to get the result with variations of density of the form $r = - \Div \xi$. In fact, given a vector field $\xi$, we will not work directly with $r = - \Div \xi$ in the right hand side of the first and second points, and in the third point, but with the density obtained by transporting the Lebesgue measure along the flow induced by $\xi$. This last remark will be explained precisely in the proof.

Let us do it rigorously. We take $\mu$ and $\nu$ two bistochastic measures sufficiently close in a sense to be specified later, $C$ and $M$ as in Lemma \ref{generalcontinuityaction}, a number $ \tau \in (0, 1/4]$ and $\xi = (\xi(t,x) \in \R^d) \in E_{\tau}$ a vector field satisfying $N(\xi) \leq 1$. We can suppose without loss of generality that with these $\tau$ and $M$, \eqref{pl2bv} is valid. We also take two small parameters $\eps$ and $\delta$ that we will fix later on. 

Call $\rho_{\delta}$ the density defined for all $t \in [0,1]$ and all $\alpha \in \mathcal{D}(\T^d)$ by
\[
\int \alpha(z) \rho_{\delta}(t,\D z) = \int \alpha (x + \delta \xi(t,x) ) \D x.
\]
\paragraph{First point.} We chose $\eta_{\mu}$ some solution to $\Pb(\mu)$. We recall that according to the finite diameter property, $\sqrt{\A(\eta_{\mu})} \leq \Diam(d) \leq M$.  For any $\omega \in \C$, we call $T_{\delta}( \omega)$ the curve whose position at time $t \in [0,1]$ is
\[
\omega(t) + \delta \xi(t, \omega(t)).
\]
Then we call $H_{\mu}(\delta) := T_{\delta} {}\pf \eta_{\mu}$. Of course, the density of $H_{\mu}(\delta)$ is $\rho_{\delta}$ and $T_{\delta}$ does not change the endpoints of the trajectories. So $H_{\mu}(\delta)$ is admissible for $\Pb(\mu, \rho_{\delta})$. Moreover, using \eqref{defpressure}, we can estimate the action of $H_{\mu}(\delta)$:
\begin{align*}
\A(\mu, \rho_{\delta}) &\leq \A(H_{\mu}(\delta)) = \frac{1}{2}\int_0^1 \hspace{-5pt} \int \left| \dot{\omega}(t) + \delta \frac{\D}{\D t} \xi(t, \omega(t)) \right|^2 \D \eta_{\mu}(\omega) \D t \\
&= \A(\eta_{\mu}) - \delta \cg p_{\mu} ,\Div \xi \cd + \frac{\delta^2}{2} \int_0^1 \hspace{-5pt} \int \left| \frac{\D}{\D t} \xi(t, \omega(t))\right|^2 \D \eta_{\mu}(\omega) \D t \\
&\leq \A(\eta_{\mu}) - \delta \cg p_{\mu} ,\Div \xi \cd + M \delta^2 N(\xi)^2.
\end{align*}
Finally, as $N(\xi)$ is supposed to be smaller than 1, 
\begin{equation}
\label{pmularge}
\A(\mu, \rho_{\delta}) -\A(\mu, \lambda) + \delta \cg p_{\mu} ,\Div \xi \cd \leq M \delta^2.
\end{equation}
In particular, there exists $\delta_0>0$ such that if $\delta \leq \delta_0$, 
\begin{equation}
\label{estimAmurhodelta}
\sqrt{\A(\mu, \rho_{\delta})} \leq 2 \Diam (d).
\end{equation}
\paragraph{Second point.} For the second estimate, the starting point is \eqref{pmultlag} written for the regularization $\rho_{\delta}^{\eps}$ of $\rho_{\delta}$:
\begin{equation}
\label{pmultlagrhoeps}
\A(\nu, \lambda ) + \cg p_{\nu}, \rho_{\delta}^{\eps} - 1\cd \leq \A(\nu, \rho_{\delta}^{\eps}).
\end{equation}
Then, we remark that as soon as $\delta < 1$, $\det(\Id + \delta \D \xi(t,z))$ is well defined and positive for all $t \in [0,1]$, for $\lambda$-almost all $x \in \T^d$, and
\[
\rho_{\delta}(t, z + \delta \xi(t,z)) = \frac{1}{\det (\Id + \delta \D \xi(t,z))}. 
\]
Moreover, as all the coefficients of $\D \xi(t,z)$ are almost everywhere smaller than one, developing the determinant and still using $\delta < 1$,
\[
\det(\Id + \delta \D \xi(t,z)) = 1 + \delta \Div \xi(t,x) + \delta^2 A(t,z, \delta)
\]
with
\[
\sup_{\delta} \| A(\bullet , \delta) \|_{L^{\infty}_{t,z}} \leq M.
\]
Subsequently, up to taking a smaller $\delta_0>0$, if 
\begin{equation}
\label{deltaleqdelta0}
\delta \leq \delta_0,
\end{equation}
then
\begin{gather}
\| \rho_{\delta} - 1 \|_{L^{\infty}_{t,x}} \leq \frac{1}{4}, \label{rhodeltabig} \\
\notag \|  \rho_{\delta} - 1 + \delta \Div \xi \|_{L^{\infty}_{t,x}} \leq \delta^2 M.
\end{gather}
As a consequence, if we define $\xi^{\eps}$ for all $t \in [0,1]$ and $z \in \T^d$ by
\[
\xi^{\eps}(t,z) := \left\{ \begin{aligned} &0 &&\mbox{if } t \in [0, \eps],\\
& \int \psi^{\eps} (z-x) \xi(s,x) \D x \mbox{ with } s = \frac{t-\eps}{1 - 2 \eps} &&\mbox{if } t \in [\eps, 1-\eps],\\
&0 && \mbox{if } t \in [1-\eps, 1],
\end{aligned}\right.
\]
we get that under condition \eqref{deltaleqdelta0},
\[
\|  \rho_{\delta}^{\eps} - 1 - \delta \Div \xi^{\eps} \|_{L^{\infty}_{t,x}} \leq \delta^2 M.
\]
In addition, if $t \in [0,\tau] \cup [1-\tau, 1]$ and $z \in \T^d$, $\rho_{\delta}^{\eps}(t,z) = 1$ and $\xi^{\eps}(t,z) = 0$. These two remarks are sufficient to make use of the regularity of the pressure field \eqref{pl2bv} proven in \cite{ambfig08}. Indeed,
\begin{align}
|\cg p_{\nu}, \rho_{\delta}^{\eps} - 1 - \delta \Div \xi^{\eps}\cd| &\leq \| p \|_{L^1([\tau, 1-\tau] \times \T^d)}\|  \rho_{\delta}^{\eps} - 1 - \delta \Div \xi^{\eps} \|_{L^{\infty}_{t,x}} \notag \\
&\leq \| p \|_{L^2([\tau, 1-\tau]; BV)}\| M \delta^2  \notag \\
&\leq M \delta^2. \label{rhoeps-1-divxieps}
\end{align}

Now we want to estimate $\xi - \xi^{\eps}$ in $L^2_t L^{\infty}_x$ norm. In the following computations, if $f$ is a function of $t$ and $x$, we will denote by $f(t)$ the function of $x$ $f(t, \bullet)$. First, if $t \in [0,\eps]$, $\xi^{\eps}(t)$ cancels, and
\[
\| \xi(t) - \xi^{\eps}(t) \|^2_{L^{\infty}_x} \leq \left( \int_0^t\|\partial_t \xi(\sigma)\|_{L^{\infty}_x} \D \sigma \right)^2 \leq \eps \int_0^{\eps} \|\partial_t \xi(\sigma)\|^2_{L^{\infty}_x} \D \sigma . 
\] 
Likewise, if $t \in [1-\eps, 1]$,
\[
\| \xi(t) - \xi^{\eps}(t) \|^2_{L^{\infty}_x} \leq \eps \int_{1-\eps}^1\|\partial_t \xi(\sigma)\|^2_{L^{\infty}_x} \D \sigma. 
\] 
Finally, if $t \in [\eps, 1-\eps]$, calling $s = (t-\eps)/(1-2\eps)$,
\begin{align*}
\| \xi(t) - \xi^{\eps}(t) \|^2_{L^{\infty}_x} &= \| \xi(t) - \xi \ast \psi^{\eps}(s) \|^2_{L^{\infty}_x} \\
&\leq 2\| \xi(t) - \xi(s) \|^2_{L^{\infty}_x} + 2\| \xi (s) - \xi \ast \psi^{\eps}(s) \|^2_{L^{\infty}_x}\\
&\leq 2 \left| \int_s^t \| \partial_t \xi(\sigma) \|_{L^{\infty}_x}\D \sigma \right|^2 + M \eps^2\\
&\leq 2\eps \int_{t-\eps}^{t+\eps}  \| \partial_t \xi(\sigma) \|^2_{L^{\infty}_x}\D \sigma  + M \eps^2.
\end{align*}
Gathering these three estimates, we get
\begin{align*}
\int_0^1& \| \xi(t) - \xi^{\eps}(t) \|^2_{L^{\infty}_x} \D t\\
 &\leq M \eps^2 + \eps \int_0^\eps \hspace{-5pt} \int_0^{\eps}  \| \partial_t \xi(\sigma) \|^2_{L^{\infty}_x}\D \sigma \D t + \eps \int_{1-\eps}^1 \int_{1-\eps}^{1}  \| \partial_t \xi(\sigma) \|^2_{L^{\infty}_x}\D \sigma \D t \\
 &\qquad + 2 \eps\int_{\eps}^{1-\eps} \int_{t-\eps}^{t + \eps}  \| \partial_t \xi(\sigma) \|^2_{L^{\infty}_x} \D \sigma  \D t \\
 &\leq M \eps^2 + \eps^2 \int_0^{\eps}\| \partial_t \xi(\sigma) \|^2_{L^{\infty}_x} \D \sigma +  \eps^2 \int_{1-\eps}^{1}\| \partial_t \xi(\sigma) \|^2_{L^{\infty}_x} \D \sigma \\
 &\qquad + 4 \eps^2 \int_0^1 \| \partial_t \xi(\sigma) \|^2_{L^{\infty}_x} \D \sigma \\
 & \leq M \eps^2,
\end{align*}
and so
\[
\| \xi - \xi^{\eps} \|_{L^2_t L^{\infty}_x} \leq M\eps.
\]
Moreover, if $t \in [0,\tau] \cup [1-\tau, 1]$, both $\xi(t)$ and $\xi^{\eps}(t)$ cancel. Consequently, for $M$ large enough,
\begin{align}
| \cg p_{\nu}, \delta \Div \xi^{\eps} - \delta \Div \xi \cd | & = \delta | \cg \nabla p_{\nu}, \xi - \xi^{\eps} \cd | \notag\\
&\leq \delta \| p_{\nu} \|_{L^2([\tau, 1-\tau]; BV)} \| \xi - \xi^{\eps} \|_{L^2_t L^{\infty}_x} \notag\\
& \leq M \delta \eps. \label{divxieps-divxi}
\end{align}
In the end, gathering \eqref{pmultlagrhoeps}, \eqref{rhoeps-1-divxieps} and \eqref{divxieps-divxi}, as soon as \eqref{deltaleqdelta0} holds,
\begin{equation}
\label{pnusmall}
-\A(\nu, \rho_{\delta}^{\eps}) + \A(\nu, \lambda) - \delta \cg p_{\nu}, \Div \xi \cd \leq M \delta ( \delta + \eps).
\end{equation}
\paragraph{Third and fourth point.}
We already saw that if \eqref{deltaleqdelta0} holds, then so do \eqref{rhodeltabig} (and thus $\rho_{\delta} \geq 3/4$) and \eqref{conditionlemma1}. Furthermore under this condition, using \eqref{Nrhoeps}, we get the existence of $K$ only depending on the dimension such that
\[
1 + N\big(\rho_{\delta}^{\eps}\big) \leq \frac{K}{\eps^{d+1}}.
\]
So as soon as \eqref{deltaleqdelta0} holds and if,
\begin{equation}
\label{Nrhodeltaepssmall}
CK \frac{\DMK(\mu, \nu)}{\eps^{(d+1)(d+2)}} \leq \frac{1}{4},
\end{equation}
then Lemma \ref{generalcontinuityaction} applies with $\rho_{\delta}$ (and \textit{a fortiori} also with $\lambda$), that is to say
\begin{gather}
\label{Anurhosmall} \A(\nu, \rho_{\delta}^{\eps}) - \A(\mu, \rho_{\delta}) \leq M \left( \eps + \frac{\DMK(\mu, \nu)}{\eps^{(d+1)(d+2)}}\right), \\
\label{Anulambdabig} \A(\mu, \lambda) - \A(\nu, \lambda) \leq M \left( \eps + \frac{\DMK(\mu, \nu)}{\eps^{(d+1)(d+2)}}\right).
\end{gather}
\paragraph{Conclusion.} The consequence of these points is that under the conditions \eqref{deltaleqdelta0} and \eqref{Nrhodeltaepssmall}, the four inequalities \eqref{pmularge}, \eqref{pnusmall}, \eqref{Anurhosmall} and \eqref{Anulambdabig} hold and summing them and dividing by $\delta$, we obtain (using $\delta \leq \delta_0 < 1$ to bound $\eps$ by $\eps / \delta$)
\[
 \cg p_{\nu} - p_{\mu}, \Div \xi \cd \leq M \left( \delta + \frac{1}{\delta} \left\{ \eps + \frac{\DMK(\mu, \nu)}{ \eps^{(d+1)(d+2)}} \right\} \right).
\]
Now it is straightforward to check that if 
\[
\DMK(\mu, \nu) \leq \min \left( \delta_0^{2 + 2(d+1)(d+2)}, \frac{1}{(4CK)^{1 + (d+1)(d+2)}} \right),
\]
then
\[
\delta := \DMK(\mu, \nu)^{1/[2 + 2(d+1)(d+2)]} \qquad \mbox{and} \qquad \eps := \DMK(\mu, \nu)^{1/[1 + (d+1)(d+2)]}
\]
satisfy \eqref{deltaleqdelta0} and \eqref{Nrhodeltaepssmall} and provide the following inequality
\[
 \cg p_{\mu} - p_{\nu}, \Div \xi \cd \leq M  \DMK(\mu, \nu)^{1/[2+2(d+1)(d+2)]}.
\]
The global H\"older property is deduced from the local one as in the proof of Theorem \ref{continuityaction}.
\qed

\section*{Appendix: proof of Lemma \ref{dacmos}}
We denote by $\DDD$ the geodesic distance on $\T^d$. We call $\Id$ the function that associates to $(t,x)$ the point $x$, $\1$ the function that associates to $(t,x)$ the value $1$. Remark that $N$ is sub-multiplicative: there exists $M$ only depending on the dimension such that for all $a$ and $b$ in $E$, then
\begin{equation}
\label{Nsubmultiplicative}
N(ab) \leq M N(a)N(b).
\end{equation}

We will use the following lemma which is a classical result in the theory of elliptic equations (see for example chapter 3 of \cite{gilbarg2015elliptic}).
\begin{Lem}
\label{solpossionlip}
Let $F$ be a bounded measurable function on the torus whose integral is null and let $U$ be a distributional solution to the Poisson equation
\[
\Delta U = F.
\]
Then $U$ is Lipschitz continuous and there exists $C$ only depending on the dimension such that
\[
\| \nabla U \|_{\infty} \leq C \| F \|_{\infty}.
\]
\end{Lem}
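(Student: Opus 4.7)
The plan is to represent $U$ through the Green's function of the Laplacian on the torus and to use the fact that, although $G$ itself is not bounded, its gradient is integrable. First I would reduce to the case where $U$ has zero mean: any two distributional solutions of $\Delta U = F$ on $\T^d$ differ by a harmonic distribution, which is constant by hypoellipticity of $\Delta$, so we may replace $U$ by $U$ minus its mean without affecting $\nabla U$. Then I would identify this normalized $U$ with the convolution $U = G * F$, where $G$ is the unique zero-mean distribution on $\T^d$ satisfying $-\Delta G = \delta_0 - 1$ in the sense of distributions.

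The core of the argument is the pointwise bound $|\nabla G(x)| \leq C \, \DDD(x,0)^{1-d}$ near the origin. To prove this, I would identify a small ball around $0$ with a ball in $\R^d$ and observe that $G - \Phi$, where $\Phi$ denotes the standard Newtonian fundamental solution of $-\Delta$ in $\R^d$, is a distributional solution of $\Delta(G - \Phi) = -1$ on this ball, hence smooth by interior elliptic regularity. Away from the origin, $G$ is smooth as well. Since the classical estimate $|\nabla \Phi(x)| \leq C|x|^{1-d}$ is integrable against $r^{d-1}\D r$, this gives $\nabla G \in L^1(\T^d)$ with a norm controlled by a constant depending only on $d$.

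The remainder is routine. I would approximate $F$ by smooth zero-mean functions $F_n$ with $\|F_n\|_{\infty} \leq \|F\|_{\infty}$, form the smooth solutions $U_n := G * F_n$, differentiate under the convolution to get $\nabla U_n = (\nabla G) * F_n$, and apply Young's inequality to obtain the uniform bound $\|\nabla U_n\|_{\infty} \leq \|\nabla G\|_{L^1} \|F_n\|_{\infty} \leq C \|F\|_{\infty}$. By uniqueness of the zero-mean solution, $U_n \to U$ in the sense of distributions, the uniform $W^{1,\infty}$ bound passes to $U$, and the continuity of the convolution $(\nabla G) * F$ (since $\nabla G \in L^1$ and $F \in L^{\infty}$) promotes the bound to the desired Lipschitz estimate. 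The main obstacle is pinning down the Newtonian asymptotics of the torus Green's function near its singularity; once that is verified, the rest is a combination of mollification and Young's inequality.
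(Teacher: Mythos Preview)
Your argument is correct: representing the zero-mean solution as a convolution with the torus Green's function, verifying that $\nabla G$ inherits the Newtonian singularity $|x|^{1-d}$ near the origin (hence lies in $L^1(\T^d)$), and concluding via Young's inequality is a clean and standard route to the estimate. The mollification step and the passage to the limit are handled appropriately; the only cosmetic slip is a sign, since with your convention $-\Delta G = \delta_0 - 1$ one has $\Delta(G*F) = -F$, so the zero-mean solution is $U = -G*F$ rather than $G*F$, but this does not affect the gradient bound.

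As for comparison with the paper: there is essentially nothing to compare. The paper does not prove this lemma at all; it simply records it as a classical elliptic estimate and refers the reader to Gilbarg--Trudinger. Your proposal therefore supplies strictly more than the paper does on this point. If anything, the Green's function argument you sketch is more transparent for the torus setting than invoking general interior Schauder or $W^{2,p}$ theory, since it makes the dimensional constant explicit as $\|\nabla G\|_{L^1(\T^d)}$ and avoids any boundary considerations.
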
 
 \begin{proof}[Proof of lemma \ref{dacmos}]
For given $s \in [0,1]$, $t \in [0,1]$ and $x \in \T^d$, we call
\begin{gather*}
h(t,x) := f(t,x) - g(t,x),\\ 
\rho(s,t,x) := (1-s) f(t,x) + s g(t,x),\\
L := \sup_t \Lip h(t, \bullet),\\
\kappa(t) := \| \partial_t h(t, \bullet) \|_{L^{\infty}(\T^d)},\\
\mathcal{N} := \max\big\{N(f), N(g)\big\}.
\end{gather*}
Notice that for all $s$, $\rho(s,\bullet )$ belongs to $E$ and all its values are greater than $1/2$. Also remark the following estimate 
\[
N\big(\rho(s, \bullet) \big) \leq \mathcal{N}.
\]

Then for all $t$ we look for the unique distributional solution to the Poisson equation
\begin{gather*}
\Delta \theta (t,x) = f(t,x) - g(t,x) = h(t,x),\\
\int \theta(t,x) \D x = 0.
\end{gather*}
(The second equation is only useful to ensure the measurability of $\theta$ with respect to time.) We first analyse the regularity of $\theta$. A direct application of Lemma \ref{solpossionlip} at each time gives
\[
\| \nabla \theta \|_{\infty} \leq C \|h \|_{\infty} \leq C N(h).
\]
Now take $x_1$, $x_2$ in $\T^d$, $t_1$, $t_2$ in $[0,1]$, and call for every $x \in \T^d$
\begin{gather*}
H(x) := h(t_2, x_2 + x) - h(t_1, x_1 + x),\\
\Theta(x) := \theta(t_2, x_2 + x) - \theta(t_1, x_1 + x).
\end{gather*}
Then $\Theta$ is a solution to the Poisson equation
\[
\Delta \Theta = H,
\]
and besides,
\[
\| H \|_{\infty} \leq L \DDD(x_1, x_2) + \int_{t_1}^{t_2} \kappa(t) \D t.
\]
As a consequence, by Lemma \ref{solpossionlip},
\[
\| \nabla \Theta \|_{\infty} \leq C \left( L \DDD(x_1, x_2) + \int_{t_1}^{t_2} \kappa(t) \D t \right).
\]
Using this estimate at $x = 0$, we get
\[
| \nabla \theta (t_2, x_2) - \nabla \theta(t_1, x_1) | \leq C  \left( L \DDD(x_1, x_2) + \int_{t_1}^{t_2} \kappa(t) \D t \right).
\]
This expression is valid for all $t_1$, $t_2$, $x_1$ and $x_2$, which exactly means that $\nabla \theta$ is in $E$, and up to taking a larger $C$, we get
\[
N( \nabla  \theta)  \leq C N(h).
\]

Now define for all $s \in [0,1]$, $t \in [0,1]$ and $x \in \T^d$ the vector
\[
v(s,t,x) := \frac{\nabla \theta(t,x)}{\rho(s,t,x)}. 
\]
Remark that for a fixed $t \in [0,1]$, the following continuity equation holds in $[0,1] \times \T^d$:
\[
\partial_s \rho + \Div (\rho v ) = 0.
\]

Now $\rho\geq 1/2$ and the reciprocal function ($a \mapsto 1/a$) is 4-Lipschitz on $[1/2, \infty[$. So using the estimates that we have on $\rho$ and $v$, \eqref{Nsubmultiplicative}, and taking $M$ large enough,
\begin{align*}
\sup_s N\big(v(s, \bullet)\big) &\leq M N(\nabla \theta) \sup_s N \left( \frac{1}{\rho(s, \bullet)} \right) \\
&\leq M N(\nabla \theta) \sup_s N\Big(\rho(s,  \bullet)\Big) \\
&\leq M N(\nabla \theta) \mathcal{N}.
\end{align*}
In particular, the Cauchy-Lipschitz theorem lets us define the family of flows defined by the ordinary differential equations 
\begin{gather*}
\forall t \in [0,1], \, \forall x \in \T^d, \quad \Phi(0,t,x) = x,\\
\forall s \in [0,1], \,\forall t \in [0,1], \, \forall x \in \T^d, \quad \partial_s \Phi(s,t,x) = v(s,t, \Phi(s,t,x)).
\end{gather*}

It is a classical fact in the theory of the continuity equation (see for example \cite{amb08}) that in this context, the method of characteristic implies that for all $s \in [0,1]$ ans $t \in [0,1]$,
\[
\rho(s,t,x) \D x = \Phi(1,t,\bullet) \pf (\rho(0,t,x) \D x),
\]
and in particular, \eqref{transport} with for all $t \in [0,1]$ and $x \in \T^d$,
\[
\Psi(t,x) := \Phi(1,t,x).
\]

Remark that if $f(t, \bullet) = g(t,\bullet)$, then $\theta(t, \bullet) = 0$, so for all $s \in [0,1]$, $v(s,t,\bullet) = 0$. Assertion \eqref{samedensitynomove} follows easily.

It remains to show the estimate \eqref{psiclosetoid}. But this is a classical result of dependence with respect to the initial conditions and to a parameter of the solutions to an ODE, following the estimates we already have on $v$.
\end{proof}

\paragraph{Acknowledgments.} This work is part of my PhD thesis supervised by Yann Brenier and Daniel Han-Kwan. I would like to thank both of them for the careful reading and advice.
 \bibliography{../../biblio/bibliographie}

\begin{thebibliography}{10}

\bibitem{amb08}
Luigi Ambrosio.
\newblock {Transport equation and Cauchy problem for non-smooth vector fields}.
\newblock In {\em {Calculus of variations and nonlinear partial differential
  equations}}, pages 1--41. Springer, 2008.

\bibitem{ambfig08}
Luigi Ambrosio and Alessio Figalli.
\newblock {On the regularity of the pressure field of Brenier{\rq}s weak
  solutions to incompressible Euler equations}.
\newblock {\em Calculus of Variations and Partial Differential Equations},
  31(4):497--509, 2008.

\bibitem{amb09}
Luigi Ambrosio and Alessio Figalli.
\newblock {Geodesics in the space of measure-preserving maps and plans}.
\newblock {\em Archive for rational mechanics and analysis}, 194(2):421--462,
  2009.

\bibitem{arn66}
Vladimir~I. Arnold.
\newblock {Sur la g{\'e}om{\'e}trie diff{\'e}rentielle des groupes de Lie de
  dimension infinie et ses applications {\`a} l'hydrodynamique des fluides
  parfaits}.
\newblock In {\em {Annales de l'institut Fourier}}, volume~16, pages 319--361.
  Institut Fourier, 1966.

\bibitem{arn99}
Vladimir~I. Arnold and Boris~A. Khesin.
\newblock {\em {Topological methods in hydrodynamics}}, volume 125.
\newblock Springer Science \& Business Media, 1999.

\bibitem{Baradat2018kinetic}
Aymeric Baradat.
\newblock {\em Work in progress}.

\bibitem{ber09}
Marc Bernot, Alessio Figalli, and Filippo Santambrogio.
\newblock {Generalized solutions for the Euler equations in one and two
  dimensions}.
\newblock {\em Journal de math{\'e}matiques pures et appliqu{\'e}es},
  91(2):137--155, 2009.

\bibitem{bre89}
Yann Brenier.
\newblock {The least action principle and the related concept of generalized
  flows for incompressible perfect fluids}.
\newblock {\em J. Amer. Math. Soc}, 2(2):225--255, 1989.

\bibitem{bre93}
Yann Brenier.
\newblock {The dual least action problem for an ideal, incompressible fluid}.
\newblock {\em Archive for rational mechanics and analysis}, 122(4):323--351,
  1993.

\bibitem{brenier1992motion}
Yann Brenier.
\newblock On the motion of an ideal incompressible fluid.
\newblock In {\em Partial differential equations of elliptic type ({C}ortona,
  1992)}, Sympos. Math., XXXV, pages 123--148. Cambridge Univ. Press,
  Cambridge, 1994.

\bibitem{brenier2003extended}
Yann Brenier.
\newblock {Extended Monge-Kantorovich theory}.
\newblock {\em Lecture notes in mathematics-Springer Verlag}, pages 91--122,
  2003.

\bibitem{Brenier2013}
Yann Brenier.
\newblock Remarks on the minimizing geodesic problem in inviscid incompressible
  fluid mechanics.
\newblock {\em Calculus of Variations and Partial Differential Equations},
  47(1):55--64, May 2013.

\bibitem{dac90}
Bernard Dacorogna and J{\"u}rgen Moser.
\newblock {On a partial differential equation involving the Jacobian
  determinant}.
\newblock In {\em {Annales de l'Institut Henri Poincar{\'e}. Analyse non
  lin{\'e}aire}}, volume~7, pages 1--26. Elsevier, 1990.

\bibitem{gilbarg2015elliptic}
David Gilbarg and Neil~S. Trudinger.
\newblock {\em Elliptic partial differential equations of second order}.
\newblock springer, 2015.

\bibitem{shn87}
Alexander~I. Shnirelman.
\newblock {On the geometry of the group of diffeomorphisms and the dynamics of
  an ideal incompressible fluid}.
\newblock {\em Mathematics of the USSR-Sbornik}, 56(1):79, 1987.

\bibitem{shn94}
Alexander~I. Shnirelman.
\newblock {Generalized fluid flows, their approximation and applications}.
\newblock {\em Geometric And Functional Analysis}, 4(5):586--620, 1994.

\end{thebibliography}
 \bibliographystyle{plain}

 \end{document}